\newtheorem{theorem}{Theorem}
\newtheorem{lemma}{Lemma}
\newtheorem{definition}{Definition}
\newtheorem{remark}{Remark}
\newtheorem*{nonumtheorem}{Theorem}
\title[Stieltjes Functions and Hurwitz Stable Entire Functions]{Stieltjes Functions and Hurwitz Stable \\ Entire Functions.}
\author{Victor Katsnelson}
\address{Department of Mathematics\\
 the Weizmann Institute\\
  Rehovot 76100\\
   Israel}
\email{victor.katsnelson@weizmann.ac.il,
victorkatsnelson@gmail.com}%
\dedicatory{Dedicated to Iosif Vladimirovich Ostrovskii.}
\begin{document}
\begin{abstract}
\label{LPS}
The concept of stability, originally introduced for polynomials,
will be extended to apply to the class of entire functions.
This generalization
will be called Hurwitz stablility and
the class of Hurwitz stable functions will serve
as the main focus of this paper.
A first theorem will show how, given a function
of either of the Stieltjes classes, a Hurwitz stable function
might be constructed.
A second approach to constructing Hurwitz stable functions,
based on using additional functions from the Laguerre-P\'{o}lya class,
will be presented in a second theorem.
\end{abstract}
\subjclass{Primary \textbf{30D}15, \textbf{30C}15, \textbf{43A}80;
Secondary \textbf{70E}50} \keywords{Stable polynomials, stable
entire functions, Routh-Hurwitz criterium, Stieltjes classes of
holomorphic functions, Stieltjes moment problem, Polya-Schur composition Theorem, Laguerre-P\'{o}lya class of entire functions.}
\maketitle

\section{Introduction}
\label{Sec1}
\setcounter{equation}{0}

The main focus of this paper will be on a particular
generalization of the idea of polynomial stability
(here, we mean polynomials with complex coefficients).

%

\emph{Stable polynomials} are polynomials which only have roots
in the open left half-plane \(\{z:\,\textup{Re}\,z<0\}\).\
These polynomials are important in automatic control theory.
The well-known Routh-Hurwitz Criterium allows for a complete
characterization of stable polynomials in
terms of their coefficients.

There was already
much interest in generalizing
the Routh-Hurwitz Stability Criterion for
suitable classes of entire functions
in the early part of the last century.
(See, for instance, the monographs \cite{CM} and \cite{L1}.)
We now introduce the stability concept for entire functions,
which we will be using:

\begin{definition}
\label{DeHuS} An entire function \(\Phi(z)\) is called
\emph{Hurwitz stable} if \(\Phi(z)\) grows  not more than
exponentially, i.e.
\begin{equation*}
\varlimsup_{z\to\infty}|z|^{-1}\ln|\Phi(z)|<\infty,
\end{equation*}
and satisfies the conditions:
\begin{enumerate}
\item[\textup{1.}] All roots of the function \(\Phi(z)\) lie in the
open left half-plane \(\{z:\,\textup{Re}\,z<0\}\);
\item[\textup{2.}]
\(h_{\Phi}(0)\geq{}h_{\Phi}(\pi)\), where \(h_{\Phi}(\theta)\) is
the indicator function of \(\Phi\), i.e.
\begin{equation*}
h_{\Phi}(\theta)=\varlimsup_{r\to\infty}r^{-1}\ln|\Phi(re^{i\theta})|.
\end{equation*}
\end{enumerate}
\end{definition}

A variant of stability for entire functions, where
the left half-plane is replaced by the upper half-plane, can be found
in publications on Hilbert spaces of entire functions by L. de Branges
(see, e.g., his monograph \cite{deB}).

The central goal of this paper is to present two methods for
constructing particular classes of Hurwitz stable entire functions.
At the outset, we begin with specific classes of functions holomorphic
in \(\mathbb{C} \setminus ( - \infty, \, 0 ]\), namely, the functions belonging
to the Stieltjes classes
\(\mathbf{S}\) and \(\mathbf{S^{\boldsymbol{-1}}}\), introduced in
Definition \ref{StCl}.
The importance of these function classes first became apparent
through Stieltjes' classical work \cite{St} on what is now known as the
Stieltjes Moment Problem.  Stieltjes' method involved
associating each non-negative finite measure
\(d\sigma(\lambda)\) supported on \([0,\infty)\)
with a function
\begin{math}
\psi_\sigma(z)=\int\limits_{0}^{+\infty}\frac{d\sigma(\lambda)}{\lambda+z},
 \ z\in\mathbb{C}\setminus(-\infty,0],
\end{math}
which is holomorphic in \(\mathbb{C} \setminus ( - \infty, \, 0 ]\).
This function \(\psi_\sigma(z)\) is called the \emph{Cauchy transform}
of the measure \(d\sigma\).
%
%
The Cauchy transforms of finite non-negative measures supported on
\([0,\infty)\) are functions \(\psi\) holomorphic in the domain
\(\mathbb{C}\setminus(-\infty,0]\) and possessing
certain positivity properties there. These positivity properties are:
\[\psi(x)\geq0 \ \textup{for} \ x>0,\ \textup{Im}\,\psi(z)\leq0\
\textup{for}\ \textup{Im}\,z>0,\ \ \textup{Im}\,\psi(z)\geq0\
\textup{for}\ \textup{Im}\,z<0.\]
It can be shown that the Stieltjes class coincides with the class of Cauchy
transforms of finite non-negative measures supported on \([0,\, +\infty)\).

The data for the Stieltjes Moment Problem is given as a sequence
\(\{s_k\}_{0\leq{}k<\infty}\) of real numbers. A solution of the
Stieltjes Moment Problem is any finite \emph{non-negative} measure
\(d\sigma(\lambda)\) supported on the positive half-axis
\([0,+\infty)\) such that the moments of this measure coincide with their respective
terms of the data sequence, i.e.:
\(\int\limits_{0}^{+\infty}\lambda^kd\sigma(\lambda)=s_k,\,0\leq{}k<\infty\).
The Stieltjes Moment Problem (which, in its original form,
is an integral representation problem) can be reformulated as a classical
interpolation problem in the Stieltjes class of functions. The
measure \(d\sigma(\lambda)\) is a solution of the above-described
Stieltjes moment problem if, and only if, its Cauchy transform
\(\psi\) (which is a function from the Stieltjes class) admits the
asymptotic expansion
\begin{math}
\psi(z)=\frac{s_0}{z}-\frac{s_1}{z^2}+\,\cdots\,+(-1)^{n}\frac{s_n}{z^{n+1}}+\,\cdots
\end{math}
near the point \(z=-\infty\).

Functions from the Stieltjes class are also related to positive
operators in Hilbert spaces. Let \(\mathfrak{H}\) be a Hilbert
space with the scalar product \(\langle\,.\,,.\,\rangle\). Furthermore,
let \(A\) be a positive self-adjoint operator in \(\mathfrak{H}\)
and \(e\in\mathfrak{H}\) with \(e\not=0\). The function
\(\psi(z)=\langle(A+zI)^{-1}e,e\rangle\) then belongs to the Stieltjes
class. This function \(\psi\) is, moreover, the Cauchy transform of the
measure \(d\sigma(\lambda)=\langle dE(\lambda)e,e\rangle\), where
\(dE(\lambda)\) is the resolution of identity of the operator
\(A\).

The simplest of our results can be formulated as follows:\\
\emph{Let \(P(z)=\sum\limits_{k}p_kz^k\) be a polynomial with only
negative roots, \(\psi(z)\) be a function from the Stieltjes class
and \(P_{\psi}(z)\) be the polynomial
\(P_{\psi}(z)=\sum\limits_{k}p_k\psi(k+1)z^k\). Then the
polynomial \(P_{\psi}\) is stable.}

This paper is organized as follows.
In Section \ref{Sec2}, we present a summary of
basic facts on functions belonging to the Stieltjes
classes. This material plays an important role in the
proofs of our main results.

Starting with a function from one of the Stieltjes classes,
\(\mathbf{S}\) or \(\mathbf{S^{\boldsymbol{-1}}}\),
we develop an approach to constructing Hurwitz stable
entire functions in Section \ref{Sec3} (see Theorem \ref{Te1}).

Section \ref{Sec4} deals with a generalization of the methods
(for constructing Hurwitz stable entire functions) of Section \ref{Sec3}.
This generalized approach (see Theorem \ref{Te2}) is based on
the additional use of a Laguerre-P\'{o}lya class entire function.
If this Laguerre-P\'{o}lya function is, in particular, chosen as
the exponential function, we again arrive at Theorem \ref{Te1}.
Our proof of Theorem \ref{Te2}, however, relies on
Theorem \ref{Te1}.

Our work here was motivated by the work of T.\,Craven and G.\,Csordas in \cite{CC}.
It was through \cite{CC} that we became interested in meromorphic Laguerre multiplier
sequences. Since meromorphic functions of the form \eqref{MerSF}
belong to the class \(\mathbf{S}\), our Theorems \ref{Te1} and \ref{Te2}
can be used to construct a class of meromorphic Laguerre multiplier
sequences. We hope soon to present a construction of this kind in a upcoming paper.
%
%

\section{Some Basic Facts on the Stieltjes Classes}
\label{Sec2}
\setcounter{equation}{0}
The Stieltjes classes
\(\mathbf{S}\) and \(\mathbf{S^{\boldsymbol{-1}}}\) are defined as follows:
\begin{definition}
\label{StCl}
Let \(\psi\) be a function holomorphic in the domain \(\mathbb{C}\setminus(-\infty,\,0]\).
\begin{enumerate}
\item[\textup{1)}.]
\emph{The function \(\psi\) belongs to the class \(\mathbf{S}\)} if \(\psi\) satisfies the
conditions
\begin{equation*}
\psi(x)>0 \textup{ for } 0<x<\infty \ \textup{ and } \
\textup{Im}\,\psi(z)\leq0 \textup{ for } \textup{Im}\,z\geq0\,;
\end{equation*}
\item[\textup{2)}.]
\emph{The function \(\psi\) belongs to the class \(\mathbf{S^{\boldsymbol{-1}}}\)} if \(\psi\) satisfies the
con\-di\-ti\-ons
\begin{equation*}
\psi(x)>0 \textup{ for } 0<x<\infty \ \textup{ and } \ \textup{Im}\,\psi(z)\geq0
\textup{ for } \textup{Im}\,z\geq0\,.
\end{equation*}
\end{enumerate}
\end{definition}

\begin{remark}
\label{nonco}
The only functions \(\psi\) belonging to
both classes \(\mathbf{S}\) and \(\mathbf{S^{\boldsymbol{-1}}}\)
are the identically constant functions. From here on, we will
thus assume that \(\psi(z)\not\equiv\textup{const}\).
\end{remark}

\begin{remark}%
A function \(\psi\in\mathbf{S}\) decreases on the
positive half-axis and takes non-negative values there.
Therefore, the limit \(\lim_{x\to+0}\psi(x)\) (which may be infinite) exists.
We define the value \(\psi(0)\) of \(\psi\) at the point
\(z=0\) to be this limit, even if \(\psi\) is not holomorphic at \(z=0\).

A function \(\psi\in\mathbf{S}^{-1}\) increases on the
positive half-axis and takes non-negative values there. Therefore,
the (finite) limit \(\lim_{x\to+0}\psi(x)\) exists. We define
the value \(\psi(0)\) of \(\psi\) at the point
\(z=0\) to be this limit, even if \(\psi\) is not holomorphic at \(z=0\):
\begin{equation}
\label{psiaz}
\psi(0)\stackrel{\textup{\tiny{def}}}{=}\lim_{t\to+0}\psi(t)\,.
\end{equation}
Clearly,
\begin{subequations}
\label{vaz}
\begin{align}
\label{vazp}
 0<\psi(0)\leq+\infty, \ \ &\textup{if } \psi\in\mathbf{S},\\
\label{vazm}
 0\leq\psi(0)<+\infty, \ \ &\textup{if } \psi\in\mathbf{S}^{-1}.
\end{align}
\end{subequations}
\end{remark}

\vspace{3.0ex}
\noindent
Functions \(\psi\) of the form
\begin{subequations}
\label{Spe}
\begin{equation}%
\label{SpeS}
\psi(z)=a+\frac{b}{z}\,,\ \textup{where}\  a,\,b \ \  \textup{are  constants}, \ a\geq0,\,b>0,
\end{equation}
belong to the class \(\mathbf{S}\).
Functions \(\psi\) of the form
\begin{equation}%
\label{SpeSm}
\psi(z)=a+bz,\ \textup{where} \ a,\,b \ \  \textup{are  constants}, \ a\geq0,\,b>0,
\end{equation}
\end{subequations}
belong to the class \(\mathbf{S}^{\boldsymbol{-1}}\).
Functions \(\psi\in\mathbf{S}\) of the form \eqref{SpeS} and
functions \(\psi\in\mathbf{S}^{\boldsymbol{-1}}\) of the form \eqref{SpeSm} are called
\emph{special} functions. Any function, belonging to a Stieltjes class
(either \(\mathbf{S}\) or \(\mathbf{S}^{\boldsymbol{-1}}\)), that is not special is called
a \emph{generic} function.

The function
\begin{equation}
\label{TE}
\hspace{5.0ex}
\psi(z)=z^{\delta}=e^{\delta(\ln|z|+i\arg{}z)}\,,\
\  |\arg{}z|<\pi \ \textup{ for } \ z\in\mathbb{C}\setminus(-\infty,\,0]\,,
\end{equation}
 is a generic function of the the class \(\mathbf{S}\) if \(-1<\delta<0\) and is
a generic function of the class \(\mathbf{S}^{\boldsymbol{-1}}\) if \(0<\delta<1\).

The function of the form%
\footnote{H.\,Delange, \cite{De}, proved that functions of the form \eqref{MerSF} form a dense subset of the set \(\mathbf{S}\)
with respect to the topology of locally uniform convergence in
 \(\mathbb{C}\setminus(-\infty,0]\).
 The general theory of multiplicative representation of functions from the class \(\mathbf{S}\) is developed in \cite{AD}.}
\begin{subequations}
\label{MerSF}
\begin{gather}
\label{MeSF}
\psi(z)=c\frac{z+b_0}{z+a_0}\prod_{k=1}^{\infty}%
\frac{1+\frac{z}{b_{k}}}{1+\frac{z}{a_{k}}},\\[-2.0ex]
\intertext{where}
\label{CoZ}
c>0,\ \ 0\leq{}a_0<b_0<a_1<b_1<a_2<b_2\,\cdots\,, \ \ \textup{and} \ a_k,\,b_k\to\infty\ \
\textup{as} \ k\to\infty,
\end{gather}
\end{subequations}
is a generic function of the class \(\mathbf{S}\).

The function \(\psi(z)\) belongs to the class \(\mathbf{S}\) if and only if the function \(1/\psi(z)\)
belongs to \(\mathbf{S}^{\boldsymbol{-1}}\).

If \(\psi_1(z),\,\psi_2(z)\) are functions of the class \(\mathbf{S}\)
and \(\alpha_1,\,\alpha_2\) are non-negative constants, \(\alpha_1+\alpha_2>0\),
then the function \(\alpha_1\psi_1(z)+\alpha_2\psi_2(z)\) and
\(\tfrac{\psi_{1}(z)\cdot\psi_{2}(z)}{\psi_{1}(z)+\psi_{2}(z)}\)
  also belongs to the class \(\mathbf{S}\) . The same holds for \(\psi_1(z)\) and \(\psi_2(z)\) with \(\mathbf{S}^{\boldsymbol{-1}}\) in place of \(\mathbf{S}\).

 If \(\psi_1(z),\,\psi_2(z)\) are both functions either of the class \(\mathbf{S}\) or of the class \(\mathbf{S}^{\boldsymbol{-1}}\),
 then their composition \((\psi_1\circ\psi_2)(z)=\psi_1(\psi_2(z))\) is a function of the class \(\mathbf{S^{\boldsymbol{-}1}}\).
 If one of the functions \(\psi_1(z),\,\psi_2(z)\) belongs to the class \(\mathbf{S}\) and the other belongs
 the class \(\mathbf{S}^{\boldsymbol{-1}}\), then  their composition \((\psi_1\circ\psi_2)(z)=\psi_1(\psi_2(z))\)
 is a function of the class  \(\mathbf{S}\).

\begin{remark}\textup{
 The paper \cite{KK} has become a standard reference for anyone writing about the classes \(\mathbf{S}\) and \(\mathbf{S^{\boldsymbol{-}1}}\). Following \cite{KK},
we call \(\mathbf{S}\) and \(\mathbf{S^{-1}}\) the \emph{Stieltjes classes}. (See the last two paragraphs of \cite{KK}.)
It should be mentioned that our definitions of the Stieltjes classes differ slightly from those in \cite{KK}. The functions considered in \cite{KK} are holomorphic in the domain
\(\mathbb{C}\setminus[0,\,+\infty)\). A function \(\psi(z)\) belongs to the class
\(\mathbf{S}\) according to our definition if and only if the
function \(\psi(-z)\) belongs to the class
\(\mathbf{S}\) according to the definition in \cite{KK}. (Similarly, \(\psi(z)\) belongs to our \(\mathbf{S^{-1}}\) if and only if \(-\psi(-z)\) belongs to their \(\mathbf{S^{\boldsymbol{-1}}}\).)
 Results for functions of the Stieltjes classes of our Definition \ref{StCl}
 may be reformulated to agree with \cite{KK} via a change in variables:
 \(\psi(z)\to\pm\psi(-z)\).}
\end{remark}

The following result on integral representations
for functions of the Stieltjes classes
is particularly important for the proof of Theorem \ref{Te1}:
\begin{nonumtheorem} Let \(\psi(z)\) be a function holomorphic in the domain
\(\mathbb{C}\setminus(-\infty,0]\).
\begin{enumerate}
\item[\textup{\textsf{a}}.] \(\psi\) belongs to the class \(\mathbf{S}\)
if and only if \(\psi(z)\) admits the representation
\begin{equation}
\label{CoS}
\psi(z)=a+\frac{b}{z}+\int\limits_{+0}^{+\infty}\frac{1}{\lambda+z}\,d\sigma(\lambda)\,,\ \
z\in\mathbb{C}\setminus(-\infty,0],
\end{equation}
where \(a,\,b\) are non-negative constants, \(d\sigma\) is a non-negative measure on
\((0,\infty)\) satisfying the condition that
\begin{equation}
\label{Cond}
\int\limits_{+0}^{+\infty}\frac{d\sigma(\lambda)}{\lambda+1}<\infty\, ,
\end{equation}
and \(a+b+\int\limits_{+0}^{+\infty}(1+\lambda)^{-1}d\sigma(\lambda)>0\).
\item[\textup{\textsf{b}}.] \(\psi\) belongs to the class \(\mathbf{S^{\boldsymbol-1}}\)
if and only if \(\psi(z)\) admits the representation
\begin{equation}
\label{CoSm}
\psi(z)=a+b{}z+\int\limits_{+0}^{+\infty}\frac{z}{\lambda+z}\,d\sigma(\lambda)\,, \ \
z\in\mathbb{C}\setminus(-\infty,0],
\end{equation}
where \(a\) and \(b\) are non-negative constants, \(d\sigma\) is a non-negative measure on
\((0,\infty)\) satisfying condition \eqref{Cond},
and \(a+b+\int\limits_{+0}^{+\infty}(1+\lambda)^{-1}d\sigma(\lambda)>0\).
 \end{enumerate}
\end{nonumtheorem}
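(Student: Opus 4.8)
The plan is to reduce the theorem to the classical Nevanlinna--Herglotz representation of functions mapping the upper half-plane \(\mathbb{H}=\{z:\textup{Im}\,z>0\}\) into its closure, and then to match normalizations. The sufficiency direction in both \textsf{a} and \textsf{b} is routine: if \(\psi\) is given by \eqref{CoS} (resp.\ \eqref{CoSm}) one checks directly that \(\psi(x)>0\) for \(x>0\), that \(\textup{Im}\,\frac{1}{\lambda+z}<0\) (resp.\ \(\textup{Im}\,\frac{z}{\lambda+z}>0\)) for \(\textup{Im}\,z>0\), so the sign condition of Definition \ref{StCl} survives integration against \(d\sigma\), and that condition \eqref{Cond} forces the integral to converge locally uniformly on \(\mathbb{C}\setminus(-\infty,0]\), hence to define a holomorphic function there. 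So the substance is the necessity direction, which I treat first for part \textsf{a}.

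For \(\psi\in\mathbf{S}\) the function \(g(z):=-\psi(z)\) satisfies \(\textup{Im}\,g(z)\geq0\) on \(\mathbb{H}\), so its restriction to \(\mathbb{H}\) is a Nevanlinna function, and the classical Herglotz representation supplies \(\alpha\in\mathbb{R}\), \(\beta\geq0\) and a non-negative measure \(\mu\) on \(\mathbb{R}\) with \(\int(1+t^2)^{-1}\,d\mu(t)<\infty\) such that
\begin{equation*}
g(z)=\alpha+\beta z+\int_{\mathbb{R}}\Bigl(\frac{1}{t-z}-\frac{t}{1+t^2}\Bigr)\,d\mu(t),\qquad z\in\mathbb{H}.
\end{equation*}
Since \(\psi\) is real on \((0,\infty)\), the Schwarz reflection principle continues \(\psi\) (hence \(g\)) analytically across \((0,\infty)\); by the Stieltjes--Perron inversion formula the mass \(\mu\) assigns to any interval of \((0,\infty)\) equals \(\lim_{\varepsilon\to0^+}\tfrac1\pi\int\textup{Im}\,g(x+i\varepsilon)\,dx=0\), so \(\textup{supp}\,\mu\subseteq(-\infty,0]\). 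Moreover \(\beta=0\), for otherwise \(g(x)=-\psi(x)\sim\beta x\to+\infty\) as \(x\to+\infty\), contradicting \(\psi(x)>0\).

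After the substitution \(t=-\lambda\), the part of \(\mu\) on \((-\infty,0)\) becomes a non-negative measure \(\sigma\) on \((0,\infty)\) while the atom of \(\mu\) at \(t=0\) produces the term \(b/z\) with \(b=\mu(\{0\})\geq0\); the kernel of \(\psi\) then reads \(\frac{1}{\lambda+z}-\frac{\lambda}{1+\lambda^2}\). Splitting off the constant \(\frac{\lambda}{1+\lambda^2}\) is legitimate only once each piece converges separately, and \textbf{this is the main obstacle}: Herglotz furnishes merely \(\int(1+\lambda^2)^{-1}\,d\sigma<\infty\), whereas \eqref{Cond} demands the stronger \(\int(1+\lambda)^{-1}\,d\sigma<\infty\). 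I expect to obtain the stronger bound from the positivity of \(\psi\) along the \emph{entire} ray \((0,\infty)\): a truncation argument bounding \(\int_{(0,N]}(\lambda+x_0)^{-1}\,d\sigma(\lambda)\) uniformly in \(N\) by the finite value \(\psi(x_0)\) at a fixed \(x_0>0\), followed by monotone convergence, should yield \eqref{Cond}. With this in hand the term \(\frac{\lambda}{1+\lambda^2}\) integrates to a finite constant which, together with \(-\alpha\), forms \(a=\lim_{x\to+\infty}\psi(x)\geq0\) (the limit existing since \(\psi\) is positive and decreasing on \((0,\infty)\)), giving \eqref{CoS}; the non-degeneracy inequality is immediate because \(\psi(1)=a+b+\int(1+\lambda)^{-1}\,d\sigma>0\).

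For part \textsf{b} I would avoid repeating the analysis and instead exploit the involution \(z\mapsto1/z\). Since \(1/z\) is a function of \(\mathbf{S}\) by \eqref{SpeS}, and \(\psi\in\mathbf{S}^{\boldsymbol{-1}}\), the composition rule recalled above shows \(\chi(z):=\psi(1/z)\in\mathbf{S}\); note that \(z\mapsto1/z\) maps \(\mathbb{C}\setminus(-\infty,0]\) onto itself, so the composition is well defined. Applying part \textsf{a} to \(\chi\) gives \(\chi(z)=a+b/z+\int_{(0,\infty)}(\lambda+z)^{-1}\,d\sigma(\lambda)\). Substituting \(z\mapsto1/w\) and using \(\frac{1}{\lambda+1/w}=\frac1\lambda\frac{w}{w+1/\lambda}\), the change of variable \(\nu=1/\lambda\) with the weighted pushforward \(d\tilde\sigma(\nu)=\tfrac1\lambda\,d\sigma(\lambda)\) converts this into \(\psi(w)=a+bw+\int_{(0,\infty)}\frac{w}{w+\nu}\,d\tilde\sigma(\nu)\), which is exactly \eqref{CoSm} with the two non-negative constants playing their new roles. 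A direct computation gives \(\int(1+\nu)^{-1}\,d\tilde\sigma(\nu)=\int(1+\lambda)^{-1}\,d\sigma(\lambda)<\infty\), so condition \eqref{Cond} is preserved, completing the reduction of \textsf{b} to \textsf{a}.
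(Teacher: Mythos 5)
The paper offers no proof of this theorem --- it is imported from Kac and Krein \cite{KK} (Theorems S1.5.1 and S1.5.2) --- so there is no internal argument to compare against; your proposal is a reconstruction of the classical proof. Your overall route (Nevanlinna--Herglotz representation of \(-\psi\) on the upper half-plane, localization of the measure on \((-\infty,0]\) by Stieltjes inversion, elimination of the linear term, then an upgrade of the integrability of the measure from positivity on the ray) is in substance the standard one, and your reduction of part \textsf{b} to part \textsf{a} via \(\chi(z)=\psi(1/z)\) with the weighted pushforward \(d\tilde\sigma(\nu)=\lambda^{-1}\,d\sigma(\lambda)\), \(\nu=1/\lambda\), is clean and correct: the identity \(\int(1+\nu)^{-1}d\tilde\sigma=\int(1+\lambda)^{-1}d\sigma\) checks out, as does the bookkeeping of \(a\) and \(b\).

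The one step that does not work as you describe it is exactly the one you flag as the main obstacle: the passage from \(\int(1+\lambda^2)^{-1}d\sigma<\infty\) to \eqref{Cond}. You propose to bound \(\int_{(0,N]}(\lambda+x_0)^{-1}d\sigma(\lambda)\) uniformly in \(N\) by the finite value \(\psi(x_0)\) at a \emph{fixed} \(x_0\). This is circular. The Herglotz formula gives \(\psi(x_0)=-\alpha+b/x_0+\int\bigl(\tfrac{1}{\lambda+x_0}-\tfrac{\lambda}{1+\lambda^2}\bigr)d\sigma(\lambda)\), and the combined kernel equals \(\tfrac{1-\lambda x_0}{(\lambda+x_0)(1+\lambda^2)}=O(\lambda^{-2})\); its absolute convergence is therefore already guaranteed by the Herglotz condition, so finiteness (or positivity) of \(\psi\) at one point carries no information beyond what Herglotz supplies, and you cannot split off \(\int(\lambda+x_0)^{-1}d\sigma\) without first knowing \(\int\lambda(1+\lambda^2)^{-1}d\sigma<\infty\) --- which is what is to be proved. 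The truncation must be taken in \(x\), not in \(N\): from \(\psi(x)>0\) one gets, for every \(x\geq1\), an \(x\)-independent upper bound for \(\int_{[1,\infty)}\tfrac{\lambda x-1}{(\lambda+x)(1+\lambda^2)}\,d\sigma(\lambda)\) (namely \(-\alpha+b\) plus the uniformly bounded contribution of \((0,1)\), where \(\sigma\) is finite); the integrand is non-negative for \(\lambda\geq1\), increases monotonically in \(x\), and tends to \(\lambda(1+\lambda^2)^{-1}\) as \(x\to+\infty\), so monotone convergence yields \(\int_{[1,\infty)}\lambda(1+\lambda^2)^{-1}d\sigma<\infty\), which is \eqref{Cond}. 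With that repair the splitting of the kernel is legitimate, \(a=-\alpha-\int\lambda(1+\lambda^2)^{-1}d\sigma=\lim_{x\to+\infty}\psi(x)\geq0\), and the rest of your argument goes through.
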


Parts \textsf{a.} and \textsf{b.} of the above Theorem appear in \cite{KK} as Theorems
S1.5.1 and S1.5.2, respectively.

\begin{remark}
\label{van} For a function \(\psi\) belonging either to the class \(\mathbf{S}\) or
to the class \(\mathbf{S}^{\boldsymbol{-1}}\), the value \(\psi(0)\) is defined by \eqref{psiaz}.
\begin{enumerate}
\begin{subequations}
\label{pszs}
\item[\textup{1}.] If \(\psi\in\mathbf{S}\), then (using representation \eqref{CoS}) this means
\begin{equation}
\label{pszsp}
\psi(0)=
\begin{cases}
+\infty,& \textup{ if } b>0;\\
a+\int\limits_{+0}^{+\infty}\frac{d\sigma(\lambda)}{\lambda},&  \textup{ if } b=0.
\end{cases}
\end{equation}
\item[\textup{2}.] If \(\psi\in\mathbf{S^{\boldsymbol-1}}\), then (using representation \eqref{CoSm}) this means
\begin{equation}
\label{pszsm}
\psi(0)=a.
\end{equation}
\end{subequations}
\end{enumerate}
\end{remark}
\begin{remark}
\label{CrSp}
\textup{
A function \(\psi\) which belongs to the class \(\mathbf{S}\) is special if and only if the measure
\(d\sigma(\lambda)\) in formula \eqref{CoS} vanishes identically, i.e. iff
\(d\sigma(\lambda)\equiv0\).
(The same holds true for any \(\psi\) in \(\mathbf{S^{\boldsymbol-1}}\)
if and only if \(d\sigma(\lambda)\) in formula \eqref{CoSm} vanishes identically.)}
\end{remark}

\section{A First Approach to Constructing Hurwitz Stable Entire Functions}
\label{Sec3}
\setcounter{equation}{0}

Starting with a function from one of the Stieltjes classes \(\mathbf{S}\)
or \(\mathbf{S}^{\boldsymbol{-}1}\), we will construct a Hurwitz stable
entire function. Our approach is based on the following observation:

\begin{remark}
\label{CoTS}%
Let \(\psi\in\mathbf{S}\).
Then there exist constants \(0<c_1 < c_2<\infty\) such that
\begin{subequations}
\label{tse}
\begin{equation}
\label{tsep}
\dfrac{c_1}{k+1}\leq\psi(k+1)\leq{}c_2, \ \ k=0,\,1,\,2,\,\ldots.
\end{equation}
Let \(\psi\in\mathbf{S}^{-1}\).
Then there exist constants \(0<c_1 < c_2<\infty\) such that
\begin{equation}
\label{tsem}
c_1\leq\psi(k)\leq{}c_2k, \ \ k=1,\,2,\,\ldots\,,
\end{equation}
\end{subequations}
(The constants  which appear in
\eqref{tse}
are independent of \(k\).)
Therefore each of the Taylor series \eqref{DegEp}, \eqref{DegEm} converges and represents an entire function of exponential type one:
\begin{align*}
\varlimsup_{|z|\to\infty}|z|^{-1}\ln|E_{\psi}(z)|&=1
\ \ \textup{for any} \ \ \psi\in \mathbf{S};\\
\varlimsup_{|z|\to\infty}|z|^{-1}\ln|E_{\psi}^{-}(z)|&=1
\ \ \textup{for any} \ \ \psi\in \mathbf{S^{-1}}.
\end{align*}
\end{remark}

\begin{definition} {\ }\\[-2.0ex]
\label{DefgE}
\begin{subequations}
\label{DegEpm}
\begin{enumerate}
\item[\textup{1.}]
Let \(\psi(z)\) be a function from the Stieltjes class \(\mathbf{S}\). The function \(E_{\psi}(z)\) is defined as the sum of the Taylor series
\begin{equation}
\label{DegEp}
E_{\psi}(z)=\sum\limits_{k=0}^{\infty}\frac{\psi(k+1)}{k!}z^{k}.
\end{equation}
\item[\textup{2.}]
Let \(\psi(z)\) be a function from the Stieltjes class \(\mathbf{S}^{\boldsymbol{-}1}\). The function \(E_{\psi}^{-}(z)\) is defined as the sum of the Taylor series
\begin{equation}
\label{DegEm}
E_{\psi}^{-}(z)=\sum\limits_{k=0}^{\infty}\frac{\psi(k)}{k!}z^{k}.
\end{equation}
\end{enumerate}
\end{subequations}
\end{definition}

\begin{theorem} {\ }\\[-2.0ex]
\label{Te1}
\begin{enumerate}
\item[\textup{1.}] Let \(\psi(z)\) be a generic function from
the Stieltjes class \(\mathbf{S}\) and the entire function \(E_{\psi}(z)\) is defined as the sum of the Taylor series
\eqref{DegEp}.\\[0.5ex]
Then the function \(E_{\psi}(z)\) is a Hurwitz stable entire function.
\item[\textup{2.}]
Let \(\psi(z)\) be a generic function from
the Stieltjes class \(\mathbf{S}^{-1}\) and the entire function \(E_{\psi}^{-}(z)\) be defined as the sum of the Taylor series
\eqref{DegEm}.\\[-2.5ex]
\begin{enumerate}
\item
If \(\psi(0)\not=0\), then  the function \(E_{\psi}^{-}(z)\) is a Hurwitz stable entire function.
\item If \(\psi(0)=0\), then  the function \(E_{\psi}^{-}(z)\)
has a simple root at the point \(z=0\) and the function
\(z^{-1}E_{\psi}^{-}(z)\) is a Hurwitz stable entire function.
\end{enumerate}
\end{enumerate}
\end{theorem}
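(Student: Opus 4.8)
\section*{Proof proposal}

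The plan is to convert the series \eqref{DegEp} into a Laplace-type integral and read everything off from it. Substituting the representation \eqref{CoS} into \(\psi(k+1)\), using \(\tfrac{1}{\lambda+k+1}=\int_0^1 t^{\lambda+k}\,dt\) and \(\sum_k \tfrac{(zt)^k}{k!}=e^{zt}\), and interchanging summation with the integrals (justified by the bound \(\psi(k+1)\le c_2\) from Remark \ref{CoTS}), I would obtain
\[
E_\psi(z)=\int_0^1 e^{zt}\,d\mu(t),\qquad d\mu(t)=a\,\delta_1(t)+\Big(b+\int_{+0}^{\infty}t^{\lambda}\,d\sigma(\lambda)\Big)\,dt,
\]
a nonnegative measure \(d\mu\) on \([0,1]\) of total mass \(\mu([0,1])=\psi(1)<\infty\). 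The growth restriction in Definition \ref{DeHuS} is already recorded in Remark \ref{CoTS} (type one), and this one formula settles condition 2: mass sits at the right endpoint \(t=1\), so \(h_{E_\psi}(0)=1\), while \(E_\psi(-r)=\int_0^1 e^{-rt}\,d\mu\le\psi(1)\) forces \(h_{E_\psi}(\pi)\le 0\); hence \(h_{E_\psi}(0)\ge h_{E_\psi}(\pi)\).

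For the zeros I would first place them in the closed left half-plane by approximation. Taking \(P_n(z)=(1+z/n)^n\), a polynomial whose only root is \(-n\), the elementary result quoted in the Introduction makes each \((P_n)_\psi(z)=\sum_{k=0}^n\binom{n}{k}n^{-k}\psi(k+1)z^k\) a stable polynomial. Since \(\binom{n}{k}n^{-k}\to\tfrac1{k!}\) while staying \(\le\tfrac1{k!}\), and \(\psi(k+1)\le c_2\), these partial sums converge to \(E_\psi\) locally uniformly; Hurwitz's theorem on the open right half-plane then shows the limit \(E_\psi\not\equiv0\) has no zero there, i.e. every root satisfies \(\textup{Re}\,z\le 0\).

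The main obstacle is to remove the boundary, i.e. to show \(E_\psi(iy)\neq0\) for real \(y\); this is exactly where genericity is indispensable, since for the special function \(\psi=b/z\) one has \(E_\psi(z)=b(e^z-1)/z\), which vanishes at every \(2\pi i n\). I would argue directly from the integral formula. As \(E_\psi(0)=\psi(1)>0\), fix \(y\neq0\) and write the density as \(w(t)=b+\int_{+0}^{\infty}t^{\lambda}\,d\sigma(\lambda)=b+\int_0^t w'(s)\,ds\), where \(w'(s)=\int_{+0}^{\infty}\lambda s^{\lambda-1}\,d\sigma(\lambda)>0\) on \((0,1)\) precisely because \(\sigma\not\equiv0\). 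Integrating by parts, the equation \(E_\psi(iy)=0\) rearranges to
\[
e^{iy}\big(a\,iy+w(1)\big)=b+\int_0^1 w'(s)\,e^{iys}\,ds,\qquad w(1)=b+\int_0^1 w'(s)\,ds.
\]
The left side has modulus \(\sqrt{a^2y^2+w(1)^2}\ge w(1)\), whereas the right side has modulus strictly below \(w(1)\) by the triangle inequality, strict because \(e^{iys}\) is nonconstant on \((0,1)\) and \(w'>0\) there. This contradiction rules out zeros on the imaginary axis and finishes part 1. The one technical wrinkle I anticipate is that \(w\) may be unbounded at \(t=1\) (as \eqref{Cond} permits \(\sigma\) to be infinite); I would handle that by first proving the claim for the truncations \(\sigma|_{(0,N]}\), where \(w(1)<\infty\), or by integrating by parts against the finite tail \(\int_t^1 w\) instead.

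For part 2 I would reduce to part 1 via the observation that \(\phi(z):=\psi(z)/z\) lies in \(\mathbf{S}\): formula \eqref{CoSm} gives \(\phi(z)=b+\tfrac{a}{z}+\int_{+0}^{\infty}\tfrac{d\sigma(\lambda)}{\lambda+z}\), which is of the form \eqref{CoS}, and \(\phi\) is generic iff \(\psi\) is. Since \(\psi(k)=k\,\phi(k)\) for \(k\ge1\) while \(\psi(0)=a\), the series \eqref{DegEm} telescopes to \(E_\psi^{-}(z)=a+z\,E_\phi(z)\). When \(\psi(0)=a=0\) this is \(z\,E_\phi(z)\), so the simple zero at the origin splits off and \(z^{-1}E_\psi^{-}=E_\phi\) is Hurwitz stable by part 1, which is case 2(b). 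When \(a\neq0\) I would establish Hurwitz stability of \(a+z\,E_\phi(z)\) by rerunning the argument of part 1 on its own Laplace representation (now carrying an extra \(b\,z\,e^{z}\) term from the atom of \(\phi\) at \(t=1\), which leaves the indicator at \(1\)); the imaginary-axis analysis is analogous, with the constant \(a\neq0\) only helping to keep \(E_\psi^{-}(iy)\) away from \(0\).
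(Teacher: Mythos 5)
Your integral representation of \(E_{\psi}\) is exactly the paper's Lemma \ref{ERepr}, and your reduction of part 2 to part 1 via \(\phi(z)=\psi(z)/z\in\mathbf{S}\) and the identity \(E_{\psi}^{-}(z)=a+zE_{\phi}(z)\) is a clean alternative to the paper's separate treatment of \eqref{InReEm}. The indicator computation is also fine. But the zero-location argument, which is the heart of the theorem, has two genuine gaps. First, your placement of the zeros in the closed left half-plane invokes the polynomial statement from the Introduction (``\(P_{\psi}\) is stable whenever \(P\) has only negative roots''). Within this paper that statement is a \emph{corollary}: it is Theorem \ref{Te2} specialized to a polynomial \(F\in\mathscr{LP}\)-I, and Theorem \ref{Te2} is proved \emph{from} Theorem \ref{Te1}. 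Using it as an input to Theorem \ref{Te1} is circular unless you supply an independent proof of the polynomial case, which you do not; and the polynomial case is not easier than the general one, since the same zero-location question must be answered for \(\sum_k p_k\psi(k+1)z^k\).

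Second, your exclusion of zeros on the imaginary axis integrates by parts against \(w'(s)=\int_{+0}^{\infty}\lambda s^{\lambda-1}\,d\sigma(\lambda)\), and the resulting identity \(e^{iy}(aiy+w(1))=b+\int_0^1 w'(s)e^{iys}\,ds\) requires \(w(1)=b+\sigma((0,\infty))<\infty\). Condition \eqref{Cond} does not give this (e.g.\ \(\psi(z)=z^{-1/2}\) has a representing measure of infinite total mass), so this is a main case, not a corner case. Your proposed repair by truncating \(\sigma\) to \((0,N]\) does not close the gap: absence of zeros on the boundary line \(\operatorname{Re}z=0\) is exactly the property that is \emph{not} preserved under locally uniform limits (a Hurwitz-type argument only protects open sets), and the alternative of integrating by parts against the finite tail \(\int_t^1 w\) is not carried out and does not obviously yield the needed strict inequality. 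The paper avoids both difficulties at once with Lemma \ref{MaL}: writing \(\operatorname{Im}\bigl(e^{-z}E_{\psi}(z)\bigr)=-\int_0^{\infty}u(t)\sin(yt)\,dt\) with \(u(t)=e^{-xt}\bigl(b+\varphi_{\sigma}(1-t)\bigr)\) on \((0,1)\) and \(u\equiv0\) beyond, it needs only that \(u\) be decreasing, strictly decreasing near \(0\) (this is where genericity enters, via strict monotonicity of \(\varphi_{\sigma}\)), and summable — finiteness of \(\varphi_{\sigma}(1)\) is irrelevant — and it handles all \(x\geq0\), \(y>0\) simultaneously, so no separate polynomial-approximation step is needed. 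I would recommend replacing your two-step zero argument by this single sine-transform lemma (or proving such a lemma yourself); the rest of your outline then goes through.
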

\begin{remark}
\label{mdIf}
Under the assumptions of Theorem \ref{Te1}, the indicator function
\(h_E(\theta)\) of each of the functions \(E_{\psi},\,E_{\psi}^{-}\): \(h_E(\theta)=\varlimsup_{r\to\infty}r^{-1}\ln|E(re^{i\theta})|\),
where \(E\) is either \(E_{\psi}\), or \(E_{\psi}^{-}\), is
 \begin{equation}%
\label{indE}
h_E(\theta)=
\begin{cases}
\cos\theta,&\ \ \textup{if } \ 0\leq|\theta|\leq\pi/2\,,\\[0.5ex]
0,&\ \ \textup{if } \ \pi/2\leq|\theta|\leq\pi\,.
\end{cases}
\end{equation}%
We will prove this later.
\end{remark}
\begin{remark} {\ }\\[-2.5ex]
\label{NoGen}
\begin{subequations}
\label{SpEpm}
\begin{enumerate}
\item[\textup{1.}]
If \(\psi\in\mathbf{S}\) is special,
i.e. \(\psi(z)=a+\dfrac{b}{z}\), where \(a\geq0,\,b>0\),
then the function \(E_{\psi}(z)\) defined as the sum of the Taylor
series \eqref{DegEp} is
\begin{equation}
\label{SpEp}
E_{\psi}(z)=ae^{z}+b\frac{e^{z}-1}{z}.
\end{equation}
The indicator function of \(E_{\psi}\), \eqref{SpEp}, is
of the form \eqref{indE}. If \(a=0\), then all roots of \(E_{\psi}\) are located on the axis \(\textup{Re}\,z=0\) (the boundary of the left half plane). If \(a>0\), then \(E\) has no roots in the closed
right half-plane \(\textup{Re}\,z\geq0\). This function has
infinitely many roots, which
 are located in the open left half-plane \(\textup{Re}\,z<0\) and
 are asymptotically close to the logarithmic "parabola"
 \(x=-\ln|ay/b|\).
 \item[\textup{2.}]
 If \(\psi\in\mathbf{S}^{-1}\) is a special function,
i.e. \(\psi(z)=a+bz\), where \(a\geq0,\,b>0\),
then \(E_{\psi}^{-}(z)\), defined as the sum of the Taylor
series \eqref{DegEm}, is
\begin{equation}
\label{SpEm}
E_{\psi}(z)=(a+bz)e^{z}.
\end{equation}
The indicator function of \(E_{\psi}^{-}\), \eqref{SpEm}, is \(h_{E_{\psi}^{-}}(\theta)=\cos\theta,\,|\theta|~\leq~\pi\).
The function \(E_{\psi}^{-}\), \eqref{SpEm}, has only one root,
which is located at the point \(z~=~-~\frac{a}{b}\).
\end{enumerate}
\end{subequations}
\end{remark}

\begin{remark}
If  \(\psi(z)\in\mathbf{S}\), it is natural to consider functions
of the form \(\sum\limits_{k=1}^{\infty}\frac{\psi(k)}{k!}z^k\),
 rather than of the form \(\sum\limits_{k=1}^{\infty}\frac{\psi(k+1)}{k!}z^k\),
and ask whether zeros of such functions are located in the left half-plane. The answer to this question is, in  general, that there are \emph{not}. The function
\begin{equation}
\label{GeBa}
E_{\delta,a}(z)=\sum\limits_{k=0}^{\infty}\frac{(k+a)^{\delta}z^k}{k!}
\end{equation}
is discussed in \cite{O} and there it is shown that, for \(0<a<1,
-1<\delta<0\), all roots of the function \(E_{\delta,a}(z)\) are located in
the open \emph{right} half-plane. However, the function
\eqref{GeBa} is of the form \(\sum\limits_{k=1}^{\infty}\frac{\psi(k)}{k!}z^k\) with
\(\psi(z)=(z+a)^{\delta}\). If \(-1<\delta<0\) and \(a>0\), then this function
\(\psi\) belongs to the Stieltjes class \(\mathbf{S}\) and is
holomorphic at the point \(z=0\).
\end{remark}


In preparation for our proof of Theorem \ref{Te1}, we
will next establish a few useful results.

\begin{lemma}
\label{FRL}%
 Let \(d\sigma(\lambda)\) be a non-negative measure on
\((0,\,\infty)\) which satisfies condition \eqref{Cond} and
such that \(d\sigma(\lambda)\not\equiv0\), i.e. for which
\begin{math}
\int\limits_{+0}^{\infty}\frac{d\sigma(\lambda)}{1+\lambda}>0\,.
\end{math}
Furthermore, let
\begin{equation}
\label{AMT}
\varphi_{\sigma}(t)=\int\limits_{+0}^{\infty}t^{\lambda}\,d\sigma(\lambda)\,,\ \ 0<{}t<1\,.
\end{equation}
Then
\begin{enumerate}
\item[\textup{\textsf{a}}.] \ \ \( 0<\varphi_{\sigma}(t)<\infty\ \text{ for }\ 0<t<1\).
\item[\textup{\textsf{b}}.]
The function \(\varphi_{\sigma}(t)\) is strictly increasing on
\((0,1)\), i.e.
\begin{equation*}
\varphi_{\sigma}(t^{\prime})<\varphi_{\sigma}(t^{\prime\prime}) \ \textup{ for }\ 0<t^{\prime}<t^{\prime\prime}<1\,.
\end{equation*}
\item[\textup{\textsf{c}}.] The function \(\varphi_{\sigma}(t)\) is summable on \((0,1)\), i.e.
\begin{math}
\displaystyle \int\limits_{0}^{1 }\varphi_{\sigma}(t)\,dt<\infty\,.
\end{math}
\end{enumerate}
\end{lemma}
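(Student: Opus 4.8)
The plan is to read off all three assertions directly from the definition \eqref{AMT}, each reducing to an elementary estimate once condition \eqref{Cond} is brought to bear. The one tool I would isolate first and reuse throughout is the pointwise domination
\begin{equation*}
t^{\lambda}\leq\frac{M_t}{1+\lambda},\qquad M_t:=\sup_{\lambda\geq0}(1+\lambda)\,t^{\lambda},
\end{equation*}
valid for each fixed \(t\in(0,1)\). Here \(M_t\) is finite because \(\lambda\mapsto(1+\lambda)t^{\lambda}\) is continuous on \([0,\infty)\), equals \(1\) at \(\lambda=0\), and tends to \(0\) as \(\lambda\to\infty\) (the factor \(t^{\lambda}=e^{\lambda\ln t}\) with \(\ln t<0\) decays exponentially, overwhelming the polynomial \(1+\lambda\)), so it attains a finite maximum. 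Integrating this bound against \(d\sigma\) yields the upper estimate of part \textsf{a}, namely \(\varphi_{\sigma}(t)\leq M_t\int_{+0}^{\infty}(1+\lambda)^{-1}\,d\sigma(\lambda)<\infty\) by \eqref{Cond}; for the strict lower bound I would note that \(t^{\lambda}>0\) everywhere on \((0,\infty)\) while the hypothesis \(\int_{+0}^{\infty}(1+\lambda)^{-1}\,d\sigma(\lambda)>0\) forces \(\sigma\) to carry positive mass there, so the integral of a strictly positive function against a nonzero nonnegative measure is positive.

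For part \textsf{b} the key observation is that, for each fixed \(\lambda>0\), the map \(t\mapsto t^{\lambda}\) is strictly increasing on \((0,1)\), since its \(t\)-derivative \(\lambda t^{\lambda-1}\) is positive. Hence for \(0<t^{\prime}<t^{\prime\prime}<1\) the integrand difference \((t^{\prime\prime})^{\lambda}-(t^{\prime})^{\lambda}\) is strictly positive throughout \((0,\infty)\); because both integrals are finite by part \textsf{a}, I may subtract under the integral sign and conclude
\begin{equation*}
\varphi_{\sigma}(t^{\prime\prime})-\varphi_{\sigma}(t^{\prime})=\int_{+0}^{\infty}\bigl((t^{\prime\prime})^{\lambda}-(t^{\prime})^{\lambda}\bigr)\,d\sigma(\lambda)>0,
\end{equation*}
once more invoking that a strictly positive integrand integrated against the nonzero measure \(\sigma\) gives a positive value.

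Part \textsf{c} is the cleanest and, in a sense, reveals why condition \eqref{Cond} is phrased as it is. Since \(t^{\lambda}\geq0\), Tonelli's theorem permits interchanging the two integrations,
\begin{equation*}
\int_{0}^{1}\varphi_{\sigma}(t)\,dt=\int_{+0}^{\infty}\Bigl(\int_{0}^{1}t^{\lambda}\,dt\Bigr)\,d\sigma(\lambda)=\int_{+0}^{\infty}\frac{d\sigma(\lambda)}{\lambda+1}<\infty,
\end{equation*}
where I used \(\int_{0}^{1}t^{\lambda}\,dt=(\lambda+1)^{-1}\) for \(\lambda>0\) and then applied \eqref{Cond} verbatim. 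I expect no serious obstacle in this lemma; the only points demanding care are the finiteness supplied by the domination bound—needed so that the subtraction in part \textsf{b} is legitimate—and the nontriviality of \(\sigma\) on \((0,\infty)\), which underlies every strict inequality.
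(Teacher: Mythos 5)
Your proof is correct and follows the same route as the paper: the paper simply declares parts \textsf{a} and \textsf{b} ``clear'' (your domination bound \(t^{\lambda}\leq M_t/(1+\lambda)\) and the monotonicity of \(t\mapsto t^{\lambda}\) are exactly the details being elided), and proves part \textsf{c} by the identical interchange of the order of integration yielding \(\int_{+0}^{\infty}(\lambda+1)^{-1}\,d\sigma(\lambda)\). No gaps.
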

\begin{proof}
Parts \textsf{a} and \textsf{b} are clear. To prove \textsf{c}, we substitute expression \eqref{AMT} for \(\varphi(t)\) in
\(\int\limits_{0}^{1}\varphi_{\sigma}(t)dt\) and change the order of integration:
\[\int\limits_{0}^{\infty}\varphi_{\sigma}(t)\,dt=\int\limits_{+0}^{\infty}\frac{d\sigma(\lambda)}{\lambda+1}\,.\]
\end{proof}
\begin{remark}
\label{GK}%
Under the assumptions of \textup{Lemma} \ref{FRL}, the kernel
\begin{equation}%
\label{Ker}
K_{\sigma}(t,s)=\varphi_{\sigma}(t\cdot{}s),\,0<t,\,s<1\,,
\end{equation}
 is positive definite, i.e.
\begin{equation*}%
\sum\limits_{1\leq{}p,\,q\leq{}n}K_{\sigma}(t_p,t_q)\,\xi_p\,\xi_q\geq0
\end{equation*}
for any \(n\in\mathbb{N}\), any \(t_1,\,\ldots\,,t_n\in(0,\,1)\)
and any \(\xi_1,\,\ldots{},\,\xi_n\in\mathbb{C}\).
\end{remark}
\begin{lemma} {\ }\\[-1.5ex]
\label{ERepr}
\begin{enumerate}
\item[\textup{\textsf{1}}.] Let \(\psi(z)\) be a function from \(\mathbf{S}\) and the entire function \(E_{\psi}(z)\)
    be defined as the sum of the Taylor series \eqref{DegEp}.\\[0.5ex]
The function \(E_{\psi}(z)\) then admits the integral representation
\begin{equation}
\label{InReEp}
E_{\psi}(z)=ae^{z}+\int\limits_{0}^{1}(b+\varphi_{\sigma}(t))e^{tz}\,dt\,,
\end{equation}
where \(a\) and \(b\)  are the constants and \(d\sigma\) is the measure which appears
in the representation of \(\psi\) in formula \eqref{CoS}; the function \(\varphi_{\sigma}(t)\) is
constructed from the measure \(d\sigma\) according to \eqref{AMT}.
\item[\textup{\textsf{2}}.]
Let \(\psi(z)\) be a function from \(\mathbf{S^{-1}}\) and the entire function \(E_{\psi}^{-}(z)\)
    be defined as the sum of the Taylor series \eqref{DegEm}.\\[0.5ex]
The function \(E_{\psi}^{-}(z)\) then admits the integral representation
\begin{equation}
\label{InReEm}
E_{\psi}^{-}(z)=(a+bz)e^{z}+z\int\limits_{0}^{1}\varphi_{\sigma}(t)e^{tz}\,dt\,,
\end{equation}
where \(a\) and \(b\)  are the constants and \(d\sigma\) is the measure which appears
in the representation of \(\psi\) in formula \eqref{CoSm}; the function \(\varphi_{\sigma}(t)\) is
constructed from the measure \(d\sigma\) according to \eqref{AMT}.
\end{enumerate}
\end{lemma}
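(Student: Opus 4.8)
The plan is to prove both integral representations by substituting the Stieltjes integral representation of $\psi$ (formulas \eqref{CoS} and \eqref{CoSm}) into the defining Taylor series and then interchanging summation and integration. I will treat part \textsf{1} first, as part \textsf{2} follows by an entirely analogous argument.

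For part \textsf{1}, I would start from $\psi(z) = a + \tfrac{b}{z} + \int_{+0}^{+\infty} \tfrac{1}{\lambda+z}\,d\sigma(\lambda)$ and evaluate it at the integer points $z = k+1$ that enter the series \eqref{DegEp}. The key computational identity I would exploit is the elementary Beta-type formula
\begin{equation*}
\frac{1}{\lambda + k + 1} = \int\limits_{0}^{1} t^{\lambda + k}\,dt\,, \qquad \frac{1}{k+1} = \int\limits_{0}^{1} t^{k}\,dt\,,
\end{equation*}
which rewrites each reciprocal as a moment of a power of $t$. Substituting these into $\psi(k+1) = a + \tfrac{b}{k+1} + \int_{+0}^{\infty} \tfrac{1}{\lambda+k+1}\,d\sigma(\lambda)$ and then into the series $\sum_k \tfrac{\psi(k+1)}{k!} z^k$, I would split the sum into three pieces. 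The constant term $a$ yields $a\sum_k \tfrac{z^k}{k!} = a e^z$. The term $\tfrac{b}{k+1}$ gives $b\sum_k \tfrac{z^k}{k!}\int_0^1 t^k\,dt = b\int_0^1 e^{tz}\,dt$ after interchanging. The measure term gives $\sum_k \tfrac{z^k}{k!}\int_0^1 t^k\big(\int_{+0}^\infty t^\lambda\,d\sigma(\lambda)\big)\,dt$; recognizing the inner integral as $\varphi_\sigma(t)$ from \eqref{AMT} and summing over $k$ produces $\int_0^1 \varphi_\sigma(t)e^{tz}\,dt$. Combining the last two pieces under a single integral gives exactly \eqref{InReEp}.

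The main obstacle, and the step requiring genuine care, is justifying the interchange of the infinite summation with the two integrations (over $t$ and over $\lambda$) in the last term, since everything is entangled. I would handle this by establishing absolute convergence: for fixed $z$, I would bound $\sum_k \tfrac{|z|^k}{k!}\int_0^1 t^k \varphi_\sigma(t)\,dt$ and use Lemma \ref{FRL}\textsf{c}, which guarantees $\varphi_\sigma$ is summable on $(0,1)$, together with $t^k \le 1$ on $(0,1)$, to dominate the sum by $\big(\int_0^1 \varphi_\sigma(t)\,dt\big)\sum_k \tfrac{|z|^k}{k!} = e^{|z|}\int_0^1 \varphi_\sigma$, which is finite. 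A Tonelli/Fubini argument on the product of counting measure, Lebesgue measure on $(0,1)$, and $d\sigma$ then licenses all rearrangements simultaneously. This is where Lemma \ref{FRL} does the essential work, and it is also reassuring that the growth estimates in Remark \ref{CoTS} already guarantee $E_\psi$ is a well-defined entire function, so the series on the left side converges for all $z$.

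For part \textsf{2}, the only structural differences are that the expansion uses $\psi(k)$ rather than $\psi(k+1)$ and that the representation \eqref{CoSm} contains the terms $a + bz + \int \tfrac{z}{\lambda+z}\,d\sigma(\lambda)$. I would write $\tfrac{z}{\lambda+z} = 1 - \tfrac{\lambda}{\lambda+z}$, or more directly evaluate $\psi(k)$ and use $\tfrac{k}{\lambda+k} = k\int_0^1 t^{\lambda+k-1}\,dt$, noting that the extra factor of $k$ (versus part \textsf{1}) is precisely what produces the outer factor $z$ multiplying the integral in \eqref{InReEm} after differentiating the exponential series term by term. Care is needed at $k=0$, where $\psi(0)$ is defined by the limit \eqref{psiaz} and equals $a$ by \eqref{pszsm}, so the $k=0$ term contributes only to the polynomial prefactor $(a+bz)e^z$ and not to the integral. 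The absolute-convergence justification is identical to part \textsf{1}, again invoking Lemma \ref{FRL}\textsf{c}.
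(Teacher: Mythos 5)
Your proposal is correct and follows essentially the same route as the paper's proof: the same Beta-type identity $\tfrac{1}{\lambda+k+1}=\int_0^1 t^{\lambda+k}\,dt$ and the same interchange of summation and integration, merely run from the series toward the integral rather than (as in the paper) expanding $e^{tz}$ inside the integral and collapsing to the series. Your explicit Fubini/Tonelli justification via Lemma \ref{FRL}\textsf{c} and your careful treatment of the $k=0$ term in part \textsf{2} are details the paper leaves implicit, but the underlying argument is identical.
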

\begin{proof}[Proof of Lemma \ref{ERepr}]
Let \(\varphi_{\sigma}(t)\) be given by \eqref{AMT}. Furthermore, let
\begin{equation}%
\label{Dfsi}
\tilde{E}_{\psi}(z)\stackrel{\textup{\tiny def}}{=}ae^{z}+\int\limits_{0}^{1}e^{tz}(b+\varphi_{\sigma}(t))\,dt\,.
\end{equation}
We now use the Taylor expansion
\begin{math}
e^{tz}=\sum\limits_{k=0}^{\infty}\frac{1}{k!}t^{k}z^{k}
\end{math}
and \eqref{AMT} to evaluate \eqref{Dfsi}.


Changing the order of integration and summation, we obtain
\begin{multline*}%
\tilde{E}_{\psi}(z)=a\,\sum\limits_{k=0}^{\infty}\frac{1}{k!}z^k
+\int\limits_{0}^{1}%
\bigg(\sum\limits_{k=0}^{\infty}\frac{1}{k!}t^kz^k\bigg)
\bigg(b+\int\limits_{0}^{\infty}t^{\lambda}%
\,d\sigma(\lambda)\bigg)\,dt=\\
=\sum\limits_{k=0}^{\infty}%
\bigg(a+\frac{b}{k+1}+\int\limits_{0}^{\infty}
\Big(\int\limits_{0}^{1}t^{\lambda+k}\,dt
\Big)d\sigma(\lambda)\bigg)\frac{1}{k!}z^k=\\
=\sum\limits_{k=0}^{\infty}\bigg(a+\frac{b}{k+1}+
\int\limits_{0}^{\infty}
\frac{d\sigma(\lambda}{\lambda+k+1}\bigg)\frac{1}{k!}z^k=
\sum\limits_{k=0}^{\infty}\frac{\psi(k+1)}{k!}z^k\,.
\end{multline*}
Thus, \(\tilde{E}_{\psi}(z)=E(z)\). The first part of the Lemma is therefore proved.
Similarly, we obtain the second part of the Lemma.
\end{proof}
\begin{lemma}
\label{MaL}
 Let \(u(t)\) be a summable function on \((0,\infty)\) which satisfies the conditions:\\[-1.5ex]
    \begin{enumerate}
    \item[\textup{1.}] \(u\) is decreasing, i.e. \(u(t^{\prime})\geq{}u(t^{\prime\prime})\),
     if \(0<t^{\prime}<t^{\prime\prime}<\infty\).
     \item[\textup{2.}] \(u\) decreases strictly for small
     \(t\), i.e. \,%
     \(\exists\,\varepsilon>0:\,%
     u(t^{\prime})>u(t^{\prime\prime})\) \,
     if \,\(0<t^{\prime}<t^{\prime\prime}<\varepsilon\).
     \item[\textup{3.}] \(\lim_{t\to+\infty}u(t)=0\).
    \end{enumerate}
    Then, for every \(y>0\):
    \begin{equation}
   \label{CrIn}
    \int\limits_{0}^{+\infty}u(t)\sin(yt)\,dt>0.
    \end{equation}
\end{lemma}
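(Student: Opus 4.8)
The plan is to exploit the oscillatory, sign-alternating structure of \(\sin(yt)\) by partitioning the integration range into consecutive half-periods of the sine and pairing them off. Fix \(y>0\) and write the integral as a sum over the intervals where \(\sin(yt)\) has constant sign. The natural breakpoints are the zeros of \(\sin(yt)\), namely \(t = n\pi/y\) for \(n = 0,1,2,\dots\). On each interval \(I_n = (n\pi/y,\,(n+1)\pi/y)\) the sine has sign \((-1)^n\). Setting \(L = \pi/y\) for the half-period, I would substitute \(t \mapsto t - nL\) on each interval to bring every contribution back to the fundamental interval \((0,L)\), where \(\sin(yt) \geq 0\). This converts \(\int_0^\infty u(t)\sin(yt)\,dt\) into a single integral over \((0,L)\) of \(\sin(yt)\) multiplied by the alternating sum
\begin{equation*}
S(t) = \sum_{n=0}^{\infty} (-1)^n\, u(t + nL)\,, \qquad 0 < t < L.
\end{equation*}
The interchange of sum and integral is justified because \(u\) is summable, so I first note that \(\int_0^\infty |u(t)\sin(yt)|\,dt \leq \int_0^\infty u(t)\,dt < \infty\) (using \(u \geq 0\), which follows from monotonicity together with \(\lim_{t\to\infty}u(t)=0\)).

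The heart of the argument is then showing that \(S(t) > 0\) for all (or almost all) \(t \in (0,L)\). This is exactly the classical alternating-series estimate: because \(u\) is decreasing, the terms \(u(t+nL)\) form a nonincreasing sequence in \(n\) for each fixed \(t\), and because \(u(t)\to 0\) the sequence tends to zero. Hence the alternating series converges and its sum lies between consecutive partial sums; in particular it is bounded below by \(u(t) - u(t+L) \geq 0\). To get strict positivity rather than mere nonnegativity I would invoke hypothesis \textsf{2}: for \(t\) in the subinterval \((0,\varepsilon)\cap(0,L)\) the strict decrease gives \(u(t) > u(t+L)\), so \(S(t) \geq u(t)-u(t+L) > 0\) there; on the rest of \((0,L)\) the same grouping still yields \(S(t) \geq 0\). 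Since \(\sin(yt) > 0\) on the open interval \((0,L)\), the integrand \(S(t)\sin(yt)\) is nonnegative throughout and strictly positive on a set of positive measure, which forces the integral to be strictly positive.

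The main obstacle I anticipate is the rigorous handling of the alternating-series lower bound \emph{inside the integral} together with the measure-theoretic bookkeeping — specifically, pairing terms as \(\bigl(u(t+2mL)-u(t+(2m+1)L)\bigr)\) so that each grouped difference is manifestly nonnegative, and then confirming that the pointwise estimate \(S(t)\geq u(t)-u(t+L)\) survives after the summation order is fixed. One must be slightly careful that the half-period \(L=\pi/y\) may exceed the scale \(\varepsilon\) of strict decrease; but this causes no real difficulty, since the strict inequality is only needed on an arbitrarily short initial piece of \((0,L)\), and the set \(\{t\in(0,L): 0<t<\varepsilon\}\) has positive measure as long as we work on the fundamental interval. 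A secondary technical point is ensuring \(u\geq 0\): a decreasing function with limit \(0\) at \(+\infty\) cannot take negative values, so this is immediate and should be recorded at the outset.
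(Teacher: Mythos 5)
Your proposal is correct and follows essentially the same route as the paper's own proof: both partition \((0,\infty)\) at the zeros \(n\pi/y\) of \(\sin(yt)\), fold all contributions back onto the fundamental half-period, group the resulting alternating series in consecutive pairs \(u(t+2mL)-u(t+(2m+1)L)\geq 0\), and extract strict positivity of the first pair from hypothesis 2 on an initial subinterval of positive measure. The only cosmetic difference is that the paper first rescales \(t\mapsto t/y\) to work on \([0,\pi]\) while you keep the half-period \(L=\pi/y\) explicit.
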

\begin{proof}

 The integral in \eqref{CrIn} converges according to the Abel Criterion.
 The contents of Lemma \ref{MaL} might appear familiar,
 however their explicit formulation is not easy to find in the literature.
 For completeness, we offer a proof.
 (The main idea of the proof can be found in \cite[\S{}3.3, Theorem 34]{HR}.
 See also \cite[Lemma 4]{OP}.)
We transform the integral \eqref{CrIn} in such a way that its
positivity becomes apparent:
\begin{multline*}
\int\limits_{0}^{+\infty}u(t)\sin(yt)\,dt=
\frac{1}{y}\int\limits_{0}^{+\infty}u(t/y)\sin(t)\,dt=\frac{1}{y}\sum\limits_{j=0}^{\infty}
\bigg(\int\limits_{j\pi}^{(j+1)\pi}u(t/y)\sin t\,dt\bigg)=\\
=\frac{1}{y}\sum\limits_{j=0}^{\infty}
\bigg(\int\limits_{0}^{\pi}u((t+j\pi)/y)\sin(t+j\pi)\,dt\bigg)=\\
=\frac{1}{y}\int\limits_{0}^{\pi} \bigg(\sum\limits_{j=0}^{\infty}
 u\Big(\frac{t+2j\pi}{y}\Big)-u\Big(\frac{t+(2j+1)\pi}{y}\Big)\bigg)\,\sin{}t\,dt\,.
\end{multline*}
Since \(u\) decreases, each term
\(u\big((t+2j\pi)/y\big)-u\big((t+(2j+1)\pi)/y\big)\) of the above
series is non-negative for \(t>0\). Since \(u(t)\) decreases
strictly for small \(t\), the term with \(j=0\) is strictly positive if \(t>0\) is
small enough. Therefore, the strict inequality \eqref{CrIn} holds.
\end{proof}
\begin{proof}[Proof of Theorem \ref{Te1}]
To prove that the entire function \(E_{\psi}(z)\) is stable,
we have to prove that \(h_{E_{\psi}}(\pi)\leq{}h_{E_{\psi}}(0)\)
and that \(E_{\psi}(z)\) has no roots in the right half-plane.
We obtain these results not from the Taylor series \eqref{DegEp}, but
from the integral representation \eqref{InReEp}.\\
\textbf{1.} Since coefficients of the Taylor series \eqref{DegEp} are positive,
the inequality
\begin{equation}
\label{CoTspm}
|E_{\psi}(-r)|<|E_{\psi}(r)|, \ \ 0<r<\infty
\end{equation}
holds. All the more, the inequality
\begin{equation}
\label{PosDE}
h_{E_{\psi}}(\pi)\leq h_{E_{\psi}}(0)
\end{equation}
holds. Nevertheless it may be interesting to calculate the indicator
function \(h_{E_{\psi}}(\theta)\) for all \(\theta\).
 From
\eqref{InReEp} it follows that
\[
|E_{\psi}(re^{i\theta})|\leq{}ae^{r\cos\theta}+
\int\limits_{0}^{1}(b+\varphi_{\sigma}(t))
\max\limits_{t\in[0,1]}(e^{tr\cos\theta})\,dt,\ \  (0\leq{}r<\infty).
\]
It is clear that
\[
\max\limits_{t\in[0,1]}(e^{tr\cos\theta})=
\begin{cases}
e^{r\cos\theta}&, \ \textup{if} \ 0\leq|\theta|\leq\frac{\pi}{2},\\
1&, \ \textup{if} \ \frac{\pi}{2}\leq|\theta|\leq\pi.
\end{cases}
\]
Thus,
\begin{gather*}
|E_{\psi}(re^{i\theta})|
\leq(a+b+\int\limits_{0}^{1}\varphi_{\sigma}(t)dt)\,e^{r\cos\theta}
\ \ \textup{for} \ \ \ 0\leq|\theta|\leq\frac{\pi}{2},\\
|E_{\psi}(re^{i\theta})|
\leq{}a+b+\int\limits_{0}^{1}\varphi_{\sigma}(t)dt
\ \ \textup{for} \ \ \ \frac{\pi}{2}\leq|\theta|\leq\pi\,.
\end{gather*}
Hence,
\begin{equation}
\label{uein}
h_{E_{\psi}}(\theta)\leq
\begin{cases}
\cos\theta&, \ \textup{if} \ 0\leq|\theta|\leq\frac{\pi}{2},\\
0&, \ \textup{if} \ \frac{\pi}{2}\leq|\theta|\leq\pi.
\end{cases}
\end{equation}
On the other hand, since
\begin{equation*}
e^{re^{i\theta}}\geq0\ \ \textup{for} \ \ \theta=0 \textup{ as well as for } \theta=\pi,
\end{equation*}
the inequality
\begin{equation}
\label{efb}
E_{\psi}(re^{i\theta})\geq
\int\limits_{0}^{1}\varphi_{\sigma}(t)e^{tr\cos\theta}\,dt
\end{equation}
holds for \(\theta=0\) and \(\theta=\pi\). Because the function
\(\varphi_{\sigma}(t)\)
is \emph{strictly} positive for \(t\in(0,1)\), it follows from
\eqref{efb} that
\begin{subequations}
\label{IhEprzpi}
\begin{gather}
\label{IhEprz}
h_{E_{\psi}}(0)\geq1, \\
\label{IhEprpi}
 h_{E_{\psi}}(\pi)\geq0.
\end{gather}
\end{subequations}
From \eqref{uein}, the converse inequalities
\begin{equation*}
h_{E_{\psi}}(0)\leq1, \ \ \ h_{E_{\psi}}(\pi)\leq0
\end{equation*}
follow. Thus, we have proved that
\begin{equation}
\label{fiin}
h_{E_{\psi}}(0)=1, \ \ \ h_{E_{\psi}}(\pi)=0.
\end{equation}
\emph{In particular, the inequality
\eqref{PosDE} holds.}
From \eqref{uein} it follows that
\begin{equation*}
h_{E_{\psi}}(\pi/2)\leq0, \ \ \ h_{E_{\psi}}(-\pi/2)\leq0.
\end{equation*}
The indicator function of an entire function of exponential type
is a support function of some compact convex set. The inequality
\begin{equation*}
h_{E_{\psi}}(\pi/2)+ h_{E_{\psi}}(-\pi/2)\geq0
\end{equation*}
expresses the fact that the width of the convex set
related to \(E_{\psi}\) is non-negative in the
direction \(\theta=\pm\frac{\pi}{2}\). Thus,
\begin{equation}
\label{fiinv}
h_{E_{\psi}}\big(\tfrac{\pi}{2}\big)=0, \ \ \ h_{E_{\psi}}\big(\!-\tfrac{\pi}{2}\big)=0,
\end{equation}
and the appropriate convex set is a closed subinterval of the real axis. From the equalities \eqref{fiin} it follows that this subinterval is the interval \([0,1]\), and the indicator function \(h_{E_{\psi}}\)
is of the form \eqref{indE}.

\textbf{2.}\ We now prove that \(E_{\psi}\) has no roots in
the closed right half-plane.
From \eqref{InReEp} it follows that
the imaginary part of \(e^{-z}E_{\psi}(z)\) can be represented as
\begin{equation}
\label{Im}
\textup{Im}\,\big(e^{-z}E_{\psi}(z)\big)=-\int\limits_{0}^{\infty}u(t)\sin{}(yt)\,dt\,,\ \ (z=x+iy)\,,
\end{equation}
where
\begin{equation}%
\label{U}%
 u(t)=
\begin{cases}
e^{-xt}\,(b+\varphi_{\sigma}(1-t)),&\ \textup{ if } \ 0<t<1\,;\\
0,&\ \textup{ if } \ 1\leq{}t<\infty\,.
\end{cases}
\end{equation}
For fixed \(x\geq0\), the function \(u(t)\) is monotone decreasing for \(t\in(0,\,\infty)\)
and \(u(t)=0\) for \(t\geq1\).
For \(t\in(0,1)\) we see, moreover, that \(u(t)\) is strictly monotone decreasing.
According to Lemma \ref{MaL}, \(\textup{Im}\,\big(e^{-z}E(z)\big)<0\) for \(x\geq{}0,\,y>0\).
Furthermore, \(E_{\psi}(z)\not=0\) for \(x\geq{}0,\,y>0\),\,(\(z=x+iy\)). Since the function \(E_{\psi}\) is real, \(E_{\psi}(z)\not=0\) for \(x\geq{}0,\,y<0\). Since all Taylor coefficients of \(E_{\psi}\) are positive,
\(E_{\psi}(x)>0\) for \(x\geq{}0\). Thus, \(E_{\psi}(z)\) has no roots in the closed
right half-plane
\(\textup{Im}\,z\geq{}0\). The first part of Theorem \ref{Te1} is thus proved.

Similarly, by using
\begin{equation}
\label{Imm}
\textup{Im}\,\big(z^{-1}e^{-z}E_{\psi}(z)\big)=-a\frac{y}{x^2+y^2}
-\int\limits_{0}^{\infty}u(t)\sin{}(yt)\,dt\,,\ \ (z=x+iy)\,,
\end{equation}
where
\begin{equation}%
\label{Um}%
 u(t)=
\begin{cases}
e^{-xt}\,\varphi_{\sigma}(1-t),&\ \textup{ if } \ 0<t<1\,;\\
0,&\ \textup{ if } \ 1\leq{}t<\infty\,,
\end{cases}
\end{equation}
instead of \eqref{Im},
we obtain the second part of the theorem.
\end{proof}

\begin{remark}
From the representation of the
function \(E_{\psi}(z)\) in \eqref{InReEp}, it follows that \(E_{\psi}(z)\) is bounded in the left half plane, i.e.
\begin{math}
|E_{\psi}(z)|\leq{}C<\infty
\end{math}
for \(\textup{Re}\,z\leq{}0\). From this estimate it follows that the
roots \(\{z_k\}\) of the function \(E_{\psi}(z)\) satisfy the Blaschke condition
\begin{equation*}
\sum\limits_{\forall\,z_k}\frac{|\textup{Re}\,z_k|}{1+|z_k|^2}<\infty\,.
\end{equation*}
Furthermore, because the indicator function \(h_{E_{\psi}}(\theta)\) is of the form
\eqref{indE}, the function \(E_{\psi}\) has infinitely many roots. These roots have
a positive density and are concentrated near the boundary \(\textup{Re}\,z=0\) of the left
half-plane.    More precisely, for \(r>0\) and \(\alpha<\beta\), let
\(n(r;\,\alpha,\,\beta)\)  be the total number of roots of~\(E\) which are contained
in the sector \(\{z:\,0\leq|z|<r,\,\alpha<\arg{}z<\beta\}\).

  Then, for any~\(\varepsilon>0\):
\begin{equation}
\label{DenE}
   \lim_{r\to\infty}\frac{n(r;\,0,\varepsilon)}{r}=\frac{1}{\pi}\,,
   \ \
   \lim_{r\to\infty}\frac{n(r;\,\pi-\varepsilon,\pi)}{r}=\frac{1}{\pi}\,,\ \
     \lim_{r\to\infty}\frac{n(r;\,\varepsilon,\pi-\varepsilon)}{r}=0\,.
\end{equation}
The analogous results hold for the functions of the form \(E_{\psi}^{-}\).
\end{remark}

\section{A Further Approach to Constructing Hurwitz Stable Entire Functions}
\label{Sec4}

Before we can discuss another approach to constructing Hurwitz stable
entire functions, we must first, following P\'{o}lya and Schur \cite{PS},
introduce a class of entire functions that will suit our purposes.

\begin{definition}
\label{LP1D}
An entire rational or transcendent function \(F(z)\) is called an \emph{entire function of type I}
if is representable in the form
\begin{equation}
\label{CaR}
F(z)=Cz^{m}\,e^{\alpha{}z}\prod\limits_{\nu=1}^{\infty}(1+\delta_{\nu}z),\ \ (C\gtrless0,\,m\geq0,\,\alpha\geq0,\,\delta_{\nu}\geq0),
\end{equation}
where
\begin{math}
\sum\limits_{\nu=0}^{\infty}\delta_{\nu}<\infty\,.
\end{math}\\
\vspace{1.5ex}
\noindent
The class of all entire functions of type \textup{I} will be denoted by \(\mathscr{LP}\)-\textup{I}.
\end{definition}

It was shown by Laguerre and P\'{o}lya \cite{Po1} that
the entire functions of type I \emph{and no others} are
uniform limits (\(\not\equiv0\)) of real polynomials having all their roots on the negative half-axis \(x\leq0\) (\(z=x+iy\)).
(Laguerre assumes uniform convergence in every finite domain. P\'{o}lya assumes this convergence only in a neighborhood of the origin.)

If \(F(z)\) is an entire function of the form \eqref{CaR}, then
its growth is not more than exponential, i.e.
\begin{equation}
\label{ETF}
\varlimsup_{z\to\infty}|z|^{-1}\ln|F(z)|=\alpha,
\end{equation}
and its indicator function
 \begin{equation}
 \label{Indf}
 h_F(\theta)=\varlimsup_{r\to\infty}r^{-1}\,\ln|F(re^{i\theta})|
 \end{equation}
  is
\begin{equation}%
\label{indf}
h_F(\theta)=
\alpha \cos\theta, \ \ \ \ 0\leq|\theta|\leq\pi\,.\\[0.5ex]
\end{equation}%
Furthermore, the limit (not only the upper limit) in \eqref{Indf} exists for \(\theta\not=\pi\).
\begin{definition} {\ }
\label{DefgF}
Let \(F(z)\) be an entire function from the Laguerre-P\'{o}lya class
\(\mathscr{LP}\)-\textup{I}
with Taylor series
\begin{equation}
\label{TAY}
F(z)=\sum\limits_{k=0}^{\infty}f_k{z^k}\,.
\end{equation}
\begin{subequations}
\label{DegFpm}
\begin{enumerate}
\item[\textup{1.}]
 Given  a function \(\psi(z)\) from the Stieltjes class \(\mathbf{S}\), the function \(F_{\psi}(z)\) is defined as the sum of the Taylor series
\begin{equation}
\label{DegFp}
F_{\psi}(z)=\sum\limits_{k=0}^{\infty}\psi(k+1)f_{k}z^{k}.
\end{equation}
\item[\textup{2.}]
 Given a function \(\psi(z)\) from the Stieltjes class \(\mathbf{S}^{\boldsymbol{-1}}\), the function \(F_{\psi}^{-}(z)\) is defined as the sum of the Taylor series
\begin{equation}
\label{DegFm}
F_{\psi}^{-}(z)=\sum\limits_{k=0}^{\infty}\psi(k)f_{k}z^{k}.
\end{equation}
\end{enumerate}
\end{subequations}
\end{definition}

From the Cauchy estimates for the Taylor coefficients \(f_k\) of the function \(F(z)\), \eqref{CaR}, and from the estimates \eqref{tse}, it follows that,
for any \(\psi\in\mathbf{S}\),
the Taylor series \eqref{DegFp} converges
and that its limit is a function of exponential type, \(\alpha\):
\begin{subequations}
\label{Grfpm}
\begin{equation}
\label{Grfp}
\varlimsup_{z\to\infty}|z|^{-1}\ln|F_{\psi}(z)|=\alpha.
\end{equation}
Similarly, for any \(\psi\in\mathbf{S^{\boldsymbol-1}}\),
the Taylor series \eqref{DegFm}  converges
and its limit is also a function of  type, \(\alpha\):
\begin{equation}
\label{Grfm}
\varlimsup_{z\to\infty}|z|^{-1}\ln|F_{\psi}^{-}(z)|=\alpha.
\end{equation}
\end{subequations}
The \(\alpha\) in \eqref{Grfp} and \eqref{Grfm} is the same one as in
formulas \eqref{CaR} and \eqref{ETF}.
\begin{theorem} {\ }\\
\label{Te2}
 Let \(F(z)\) be an entire function from the Laguerre-P\'{o}lya class \mbox{\textup{\(\mathscr{LP}\)-I}},  \(F(0)\not=0\) and \eqref{TAY} be
 the Taylor series of the function \(F\).
\begin{enumerate}
\item[\textup{1.}]
Given a generic function \(\psi(z)\) from
the Stieltjes class \(\mathbf{S}\),
 let the function \(F_{\psi}(z)\) be defined as the sum of the Taylor series \eqref{DegFp}.\\
 \hspace*{2.5ex}Then \(F_{\psi}(z)\) is a Hurwitz stable entire function.
\item[\textup{2.}]
Given a generic function \(\psi(z)\) from
the Stieltjes class \(\mathbf{S^{\boldsymbol{-}1}}\),
 let the function \(F_{\psi}^{-}(z)\) be defined as the sum of the Taylor series \eqref{DegFm}.\\[-2.5ex]
\begin{enumerate}
\item
If \(\psi(0)\not=0\), then  \(F_{\psi}^{-}(z)\) is a Hurwitz stable entire function.
\item If \(\psi(0)=0\), then \(F_{\psi}^{-}(z)\)
has a simple root at \(z=0\) and
\(z^{-1}F_{\psi}^{-}(z)\) is a Hurwitz stable entire function.
\end{enumerate}
\end{enumerate}
\end{theorem}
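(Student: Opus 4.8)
The plan is to mirror the proof of Theorem \ref{Te1} as closely as possible, reducing the general \(F\in\mathscr{LP}\)-\textup{I} to the exponential case \(F=e^{z}\) settled there; this is the sense in which the argument relies on Theorem \ref{Te1}.

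\emph{Step 1: an integral representation.} First I would substitute the representation \eqref{CoS} of \(\psi\) into the series \eqref{DegFp}, write \(\tfrac{1}{\lambda+k+1}=\int_{0}^{1}t^{\lambda+k}\,dt\), and interchange summation and integration exactly as in the proof of Lemma \ref{ERepr}, recognising \(\sum_{k}f_{k}(tz)^{k}=F(tz)\). This yields
\[
F_{\psi}(z)=aF(z)+\int_{0}^{1}\bigl(b+\varphi_{\sigma}(t)\bigr)F(tz)\,dt ,
\]
with \(\varphi_{\sigma}\) as in \eqref{AMT}; the parallel computation from \eqref{CoSm} gives \(F_{\psi}^{-}(z)=aF(z)+bzF'(z)+z\int_{0}^{1}\varphi_{\sigma}(t)F'(tz)\,dt\). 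Since \(F\in\mathscr{LP}\)-\textup{I} with \(F(0)\neq0\), we may take \(C>0\) in \eqref{CaR}, so that all \(f_{k}\geq0\); together with \(\psi(k+1)>0\) this makes the Taylor coefficients of \(F_{\psi}\) non-negative, whence \(|F_{\psi}(-r)|\leq F_{\psi}(r)\) and condition 2 of Definition \ref{DeHuS}, namely \(h_{F_{\psi}}(\pi)\leq h_{F_{\psi}}(0)\), follows verbatim as in Theorem \ref{Te1}. The exponential growth is the content of \eqref{Grfp}.

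\emph{Step 2: the base case is Theorem \ref{Te1}.} When \(F(z)=Ce^{\alpha z}\) the representation of Step 1 collapses, upon comparison with \eqref{InReEp}, to \(F_{\psi}(z)=C\,E_{\psi}(\alpha z)\). As \(\alpha\geq0\), the map \(z\mapsto\alpha z\) carries the closed right half-plane into itself, so the absence of zeros of \(E_{\psi}\) there (Theorem \ref{Te1}) transfers at once to \(F_{\psi}\); the same scaling handles \(E_{\psi}^{-}\) for Part 2.

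\emph{Step 3: inserting the factors \((1+\delta_{\nu}z)\).} It remains to pass from \(Ce^{\alpha z}\) to the full product. By Hurwitz's theorem on zeros of a locally uniform limit it suffices to treat a finite product \(F=Ce^{\alpha z}\prod_{\nu=1}^{N}(1+\delta_{\nu}z)\) and to induct on \(N\), each step replacing \(F\) by \(\widetilde{F}=(1+\delta z)F\). A short coefficient computation gives \(\widetilde{F}_{\psi}(z)=F_{\psi}(z)+\delta z\,F_{\psi_{1}}(z)\), where \(\psi_{1}(z):=\psi(z+1)\) again belongs to \(\mathbf{S}\) and is generic; equivalently, since \(\{k!\,f_{k}\}\) is a multiplier sequence of the first kind (this is precisely the P\'{o}lya--Schur criterion for \(F\in\mathscr{LP}\)-\textup{I}), one has \(F_{\psi}=T_{\gamma}E_{\psi}\) for the associated multiplier operator \(T_{\gamma}\). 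For Part 2 the \(\mathbf{S}^{-1}\) case is identical with \(F'\) in place of \(F\); when \(\psi(0)=a=0\) the representation reads \(F_{\psi}^{-}(z)=z\bigl[\,bF'(z)+\int_{0}^{1}\varphi_{\sigma}(t)F'(tz)\,dt\,\bigr]\), which exhibits the simple zero at the origin and reduces the stability of \(z^{-1}F_{\psi}^{-}\) to the same scheme.

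\emph{The main obstacle} is exactly closing Step 3. The clean imaginary-part argument that worked for \(e^{z}\) does not survive: writing the putative zero equation as \(0=a+\int_{0}^{1}(b+\varphi_{\sigma}(t))\,\overline{F(z)}F(tz)\,dt\,/\,|F(z)|^{2}\) and taking imaginary parts, one must control \(\textup{Im}\,[\overline{F(z)}F(tz)]\); but each zero factor \((1+\delta_{\nu}\overline{z})(1+\delta_{\nu}tz)\) adds a further non-positive increment to \(\arg[\overline{F(z)}F(tz)]\), so with many factors this argument sinks below \(-\pi\) and the integrand loses its sign, so that Lemma \ref{MaL} no longer applies. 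Thus the genuine work is to show that the elementary factor \((1+\delta z)\) preserves the zero-free property in the right half-plane. I expect this to require the half-plane form of the P\'{o}lya--Schur composition theorem, or, concretely, an interlacing (Hermite--Biehler) relation expressing that \(z\,F_{\psi_{1}}(z)/F_{\psi}(z)\) maps the right half-plane into itself, so that \(1+\delta z\,F_{\psi_{1}}(z)/F_{\psi}(z)\) cannot vanish there.
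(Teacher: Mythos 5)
Your Step 1 reproduces the paper's Lemma \ref{CrReprF} and the first half of its proof of Theorem \ref{Te2}: the integral representation \eqref{InReF} and the indicator estimate \(h_{F_{\psi}}(\pi)\leq h_{F_{\psi}}(0)\) are obtained exactly as you describe (and, incidentally, your formula \(F_{\psi}^{-}(z)=aF(z)+bzF'(z)+z\int_{0}^{1}\varphi_{\sigma}(t)F'(tz)\,dt\) is the correct form of \eqref{InReFm}, whose printed term \((a+bz)F'(z)\) should read \(aF(z)+bzF'(z)\)). The difficulty you flag at the end is, however, a genuine gap, and it is precisely the heart of the theorem. Your induction on the elementary factors \((1+\delta_{\nu}z)\) cannot close on the stated inductive hypothesis: the identity \(\widetilde{F}_{\psi}=F_{\psi}+\delta z\,F_{\psi_{1}}\) with \(\psi_{1}(z)=\psi(z+1)\) couples two \emph{different} composed functions, and knowing only that each of \(F_{\psi}\) and \(F_{\psi_{1}}\) is zero-free in the right half-plane says nothing about their sum; one needs a joint Hermite--Biehler-type relation between them, which you correctly anticipate but do not supply. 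Equivalently, the assertion that \(\{k!f_{k}\}\) acts as a stability-preserving multiplier sequence \emph{is} the half-plane composition theorem you would need to prove, not a fact you may assume: the P\'{o}lya--Schur criterion you invoke characterizes sequences preserving \emph{real} zeros, and its half-plane analogue is a separate (though classical) result.

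The paper closes this gap by a different route that avoids any induction on factors. From Theorem \ref{Te1} it records that \(E_{\psi}\) is an entire function of exponential type with positive defect \(d_{E_\psi}=\tfrac{1}{2}\bigl(h_{E_\psi}(0)-h_{E_\psi}(\pi)\bigr)\) and all zeros in the closed left half-plane; by theorems from \cite{L1} such a function is the locally uniform limit of polynomials \(E_{n}(z)=\sum_{k}\frac{e_{k,n}}{k!}z^{k}\) whose zeros all lie in the closed left half-plane. It then forms the composed polynomials \(F_{n}(z)=\sum_{k}e_{k,n}f_{k}z^{k}\) and applies the P\'{o}lya--Schur composition theorem \emph{for polynomials}, in the half-plane form given in \cite{L1} (Chapt.~8, Sect.~3), to conclude that each \(F_{n}\) has all its zeros in the left half-plane; since \(F_{n}\to F_{\psi}\) locally uniformly, the zeros of \(F_{\psi}\) lie in the closed left half-plane. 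Note that even the paper stops at the \emph{closed} half-plane and explicitly defers the strict statement \(\textup{Re}\,z<0\) to future work, so the refinement needed for the open half-plane is not in the paper either. If you want to salvage your outline, the cleanest fix is to replace Step 3 by this approximation-plus-composition argument, for which Theorem \ref{Te1} supplies exactly the required input (stability and positive defect of \(E_{\psi}\)).
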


Theorem \ref{Te1} is a special case of Theorem \ref{Te2} corresponding to the choice \(F(z)=e^{z}\). Theorem \ref{Te1}, however, is used in our proof of Theorem \ref{Te2}.

The following Lemma will be used to prove Theorem \ref{Te2} in much the same way that Lemma \ref{ERepr} was used to prove Theorem \ref{Te1}.
\begin{lemma}
\label{CrReprF}
 Let \(F(z)\) be an entire function from the Laguerre-P\'{o}lya class \mbox{\textup{\(\mathscr{LP}\)-I}}      and suppose that \eqref{TAY} is
 the Taylor series of the function \(F\).
\begin{enumerate}
\item[\textup{\textsf{1}}.]Given a function \(\psi(z)\) from
the Stieltjes class \(\mathbf{S}\),
 let the function \(F_{\psi}(z)\) be defined as the sum of the Taylor series \eqref{DegFp}.\\
 \hspace*{2.5ex}Then \(F_{\psi}(z)\) admits the integral representation
 \begin{equation}
\label{InReF}
F_{\psi}(z)=aF(z)+\int\limits_{0}^{1}(b+\varphi_{\sigma}(t))F(tz)\,dt\,,
\end{equation}
where \(a\) and \(b\)  are the constants and \(d\sigma\) is the measure which appears
as part of the representation of \(\psi\) in \eqref{CoS}; the function \(\varphi_{\sigma}(t)\) is
constructed from the measure \(d\sigma\) according to \eqref{AMT}.
\item[\textup{\textsf{2}}.]
Given a generic function \(\psi(z)\) from
the Stieltjes class \(\mathbf{S^{\boldsymbol{-}1}}\),
 let the function \(F_{\psi}^{-}(z)\) be defined as the sum of the Taylor series \eqref{DegFm}.\\
 \hspace*{2.5ex}Then \(F_{\psi}^{-}(z)\) admits the integral representation
\begin{equation}
\label{InReFm}
F_{\psi}^{-}(z)=(a+bz)F^{\prime}(z)+
z\int\limits_{0}^{1}\varphi_{\sigma}(t)F^{\prime}(tz)\,dt\,,
\end{equation}
where \(a\) and \(b\)  are the constants and \(d\sigma\) is the measure which appears
as part of the representation of \(\psi\) in \eqref{CoSm}; the function \(\varphi_{\sigma}(t)\) is
constructed from the measure \(d\sigma\) according to \eqref{AMT}.
\end{enumerate}
\end{lemma}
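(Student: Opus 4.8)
The plan is to establish both representations of Lemma \ref{CrReprF} by the same term-by-term expansion used in the proof of Lemma \ref{ERepr}, with the exponential monomials \(z^k/k!\) there replaced by the Taylor monomials \(f_k z^k\) of \(F\). First I would substitute the canonical representations of \(\psi\) into its values at the integers: for part \textsf{1}, formula \eqref{CoS} gives \(\psi(k+1)=a+\tfrac{b}{k+1}+\int_{+0}^{\infty}\tfrac{d\sigma(\lambda)}{\lambda+k+1}\), while for part \textsf{2}, formula \eqref{CoSm} gives \(\psi(k)=a+bk+k\int_{+0}^{\infty}\tfrac{d\sigma(\lambda)}{\lambda+k}\). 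The one computation underlying everything is the moment identity \(\int_{0}^{1}\varphi_{\sigma}(t)\,t^{k}\,dt=\int_{+0}^{\infty}\tfrac{d\sigma(\lambda)}{\lambda+k+1}\), which follows by inserting \eqref{AMT} and interchanging the order of integration exactly as in the proof of Lemma \ref{FRL}\,\textsf{c}.

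For part \textsf{1} I would expand \(F(tz)=\sum_{k}f_k t^k z^k\) inside the right-hand side of \eqref{InReF}, interchange summation and integration, and read off the coefficient of \(z^k\): the summand \(aF(z)\) contributes \(af_k\), the summand \(\int_0^1 b\,F(tz)\,dt\) contributes \(\tfrac{b}{k+1}f_k\) (since \(\int_0^1 t^k\,dt=\tfrac{1}{k+1}\)), and \(\int_0^1\varphi_\sigma(t)F(tz)\,dt\) contributes \(\big(\int_{+0}^{\infty}\tfrac{d\sigma(\lambda)}{\lambda+k+1}\big)f_k\) by the moment identity. Their sum is \(\psi(k+1)f_k\), the coefficient of \(z^k\) in \eqref{DegFp}, so \eqref{InReF} follows; this is a verbatim transcription of the proof of Lemma \ref{ERepr}\,\textsf{1}.

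For part \textsf{2} the decisive feature is the factor \(k\) multiplying both the \(b\)-term and the \(d\sigma\)-term of \(\psi(k)\): since \(k f_k z^{k}=z\cdot k f_k z^{k-1}\) is the coefficient pattern of \(zF'(z)\), this factor is precisely what replaces \(F\) by \(F'\) in the representation. Writing \(F'(tz)=\sum_{k\ge1}k f_k t^{k-1}z^{k-1}\), using the identity \(z^{k}t^{k-1}=z(zt)^{k-1}\), and invoking the moment identity with \(k\) replaced by \(k-1\), I would identify \(z\int_0^1\varphi_\sigma(t)F'(tz)\,dt\) with \(\sum_{k\ge1}k f_k\big(\int_{+0}^{\infty}\tfrac{d\sigma(\lambda)}{\lambda+k}\big)z^{k}\) and \(bzF'(z)\) with \(\sum_{k\ge1}bk f_k z^{k}\), while the constant \(a\) contributes \(af_k\) through \(aF(z)\). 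Summing the three contributions reproduces \(\sum_k\psi(k)f_k z^{k}\), the series \eqref{DegFm}, and thereby establishes the representation \eqref{InReFm}; specializing to \(F(z)=e^{z}\), where \(F'=F\), recovers \eqref{InReEm}, as it must.

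The step I expect to be the genuine obstacle is the justification of interchanging the summation with the two integrations — over \(t\in(0,1)\) and over \(\lambda\in(0,\infty)\) — which is more delicate in part \textsf{2}, where \(\psi(k)\) may grow linearly in \(k\) by \eqref{tsem}. The clean way to dispatch this rests on a structural fact: the Taylor coefficients \(f_k\) of an \(\mathscr{LP}\)-\textup{I} function are all of one sign, namely that of the constant \(C\) in \eqref{CaR}, because \(z^{m}e^{\alpha z}\prod_\nu(1+\delta_\nu z)\) has nonnegative Taylor coefficients. Replacing \(z\) by \(|z|\) and factoring out \(\operatorname{sgn}C\) therefore makes every integrand nonnegative, so that Tonelli's theorem licenses all the exchanges, while the summability of \(\varphi_\sigma\) on \((0,1)\) from Lemma \ref{FRL}\,\textsf{c} controls the endpoint \(t=0\) at which \(\varphi_\sigma\) may be unbounded. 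Once the resulting double series and integral are seen to converge absolutely for every \(z\), Fubini's theorem upgrades the identity from \(z=|z|\ge0\) to all complex \(z\), completing both parts.
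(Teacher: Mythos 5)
Your computation is correct, and it is exactly the paper's intended argument: the paper's entire proof of Lemma \ref{CrReprF} consists of the remark that one repeats the proof of Lemma \ref{ERepr} with the Taylor expansion \eqref{TAY} of \(F\) in place of \(e^{z}=\sum_k z^k/k!\), and your term-by-term coefficient comparison via the moment identity \(\int_0^1\varphi_\sigma(t)\,t^k\,dt=\int_{+0}^{+\infty}\frac{d\sigma(\lambda)}{\lambda+k+1}\) is precisely that computation. Your justification of the interchanges by Tonelli, resting on the observation that all Taylor coefficients of an \(\mathscr{LP}\)-I function share the sign of \(C\) in \eqref{CaR}, is a detail the paper leaves entirely implicit, and it is sound: the linear growth of \(\psi(k)\) allowed by \eqref{tsem} is harmless because \(\sum_k k|f_k|\,|z|^k\) converges for every \(z\).

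One point in part \textsf{2} deserves explicit attention, because it is a discrepancy between what you prove and what the lemma literally asserts. Your three contributions sum to
\[
F_{\psi}^{-}(z)=aF(z)+bzF^{\prime}(z)+z\int\limits_{0}^{1}\varphi_{\sigma}(t)F^{\prime}(tz)\,dt,
\]
whereas the printed formula \eqref{InReFm} reads \((a+bz)F^{\prime}(z)+z\int_0^1\varphi_\sigma(t)F^{\prime}(tz)\,dt\), whose \(a\)-term is \(aF^{\prime}(z)\) and therefore contributes \(a(k+1)f_{k+1}\) to the coefficient of \(z^k\) instead of the required \(af_k\). The two expressions agree for all \(k\) if and only if \(f_k=f_0/k!\), i.e. \(F(z)=f_0e^{z}\) — which is why \eqref{InReFm} specializes correctly to \eqref{InReEm} but fails for general \(F\in\mathscr{LP}\)-I. (Concretely, take \(F(z)=(1+z)^2\) and any \(\psi\in\mathbf{S^{-1}}\) with \(a=\psi(0)>0\): the constant term of the printed right-hand side is \(aF^{\prime}(0)=2a\), while \(F_{\psi}^{-}(0)=\psi(0)f_0=a\); the genericity hypothesis does not help, since the \(d\sigma\)-term carries the factor \(k\) and cannot affect the constant coefficient.) So your derivation is the correct one and in fact exposes a typo in the statement of \eqref{InReFm}; you should say explicitly that you are establishing the corrected identity displayed above rather than claiming, as your last sentence of the part-\textsf{2} paragraph does, that the printed \eqref{InReFm} follows verbatim. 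With that emendation your proof is complete; note also that this correction propagates to the proof of Theorem \ref{Te2}, where the role played there by \(F^{\prime}\) must be adjusted for the \(aF(z)\) summand.
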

\begin{proof} The proof of Lemma \ref{CrReprF} is, in almost all ways, similar to the proof of Lemma \ref{ERepr}. The sole difference is that \(F(z)\)'s Taylor expansion \eqref{TAY} is used instead of the Taylor expansion \(e^z=\sum\limits_{k=0}^{\infty}\frac{1}{k!}z^k\) of the exponential function.
\end{proof}

\begin{proof}[Proof of Theorem \ref{Te2}] {\ }\\
We already know that the functions \(F_{\psi},\,F_{\psi}^{-}\) are
entire functions of exponential type~\(\alpha\). (See \eqref{Grfpm}.)
To prove the first part of the theorem, we have to prove that
\begin{equation}
\label{PosDeFp}
h_{F_{\psi}}(0)\geq{}h_{F_{\psi}}(\pi)
\end{equation}
and that all zeros of the function \(F_{\psi}\) lie in the left half-plane.\\
\textbf{1.} Inequality \eqref{PosDeFp} can be proved in much the same way
as inequality \eqref{PosDE}. Because of \eqref{indf}, we obtain for every
\(\varepsilon>0\) and \(r>0\), the following estimate for \(F(re^{i\theta})\):
\begin{equation*}
|F(re^{i\theta})|\leq{}C(\varepsilon)e^{\alpha{}r\cos\theta+\varepsilon{}r},
\end{equation*}
where \(C(\varepsilon)\) does not depend on  \(r\), \(C(\varepsilon)<\infty\) for every
\(\varepsilon>0\). Combining this inequality with the integral representation
\eqref{InReF}, we obtain the inequality for the indicator function
\(h_{F_{\psi}}(\theta)\):
\begin{equation}%
\label{indFp}
h_{F_{\psi}}(\theta)\leq
\begin{cases}
\alpha\cos\theta,&\ \ \textup{if } \ 0\leq|\theta|\leq\pi/2\,,\\[0.5ex]
0,&\ \ \textup{if } \ \pi/2\leq|\theta|\leq\pi\,.
\end{cases}
\end{equation}%
We obtain inequality \eqref{indFp} in the same way as we did inequality \eqref{uein}. In particular, from \eqref{indFp} it follows that
\begin{subequations}
\label{IhFprzpi}
\begin{gather}
\label{IhFprz}
h_{F_{\psi}}(0)\leq{}\alpha,\\
\label{IhFprpi}
h_{F_{\psi}}(\pi)\leq0.
\end{gather}
\end{subequations}

We obtain the converse inequality to \eqref{IhFprz} just as we did the inequality \eqref{IhEprz}. Without loss of generality,
we can assume that \(F(0)>0\). From representation \eqref{LP1D}, we get
the inequality
\begin{equation}
\label{InFrpo}
F(r)\geq{}F(0)e^{\alpha{}r}, \ \ 0\leq{}r<\infty.
\end{equation}
In particular, \(F(r)\) is positive for \(r\geq0\).
Combining \eqref{InFrpo} with representation \eqref{InReF}, we obtain the inequality
\begin{equation}
\label{EFpfb}
F_{\psi}(r)\geq{}aF(0)e^{\alpha{}r}+
F(0)\int\limits_{0}^{1}(b+\varphi_{\sigma}(t))e^{\alpha{}tr}dt,\ \ 0\leq{}r<\infty.
\end{equation}
From this inequality and the properties of the function \(\varphi_{\sigma}(t)\)
(See part \textsf{a.} of Lemma \ref{FRL}), we obtain the
the estimate,
\begin{math}
h_{F_{\psi}}(0)\geq\alpha.
\end{math}
The converse inequality \eqref{IhFprz} has already been shown.
Thus,
\begin{equation}
\label{viFaz}
h_{F_{\psi}}=\alpha.
\end{equation}
Inequality \eqref{PosDeFp} follows from equality \eqref{viFaz}
and inequality \eqref{IhFprpi}.

The converse inequality to \eqref{IhFprpi} does not, in general, hold. The difference between the cases of the functions \(E_{\psi}\)
and \(F_{\psi}\) is that the exponential function \(e^{-r}\) is positive
for all \(r\geq0\), but the function \(F(-r)\) takes both positive and
negative values for \(0\leq{}r<\infty\). Thus, the value \(h{F_{\psi}}\)
is non-positive in general:
\begin{equation}
\label{viFapi}
h_{F_{\psi}}(\pi)=-\beta,\ \ \textup{where} \ \ 0\leq\beta\leq\alpha.
\end{equation}
(The inequality \(\beta\leq\alpha\) means that \(h_{F_{\psi}}(0)+h_{F_{\psi}}(\pi)\geq0\).)

From \eqref{viFaz}, \eqref{viFapi} and \eqref{indFp}, it follows that the
indicator function \(h_{F_{\psi}}(\theta)\) is the support function
of the interval \([\beta,\alpha]\) of the real axis:
\begin{equation*}
h_{F_{\psi}}(\theta)=
\begin{cases}
\alpha\cos\theta, & \ \ \textup{if} \ \ |\theta|\leq\pi/2,\\
\beta\cos\theta, & \ \ \textup{if} \ \ \pi/2\leq|\theta|\leq\pi,
\end{cases}
\end{equation*}
where \(\alpha\) is the same as in \eqref{LP1D} and \(\beta,\,0\leq\beta\leq\alpha\) depends on \(F\) and \(\psi\).\\[0.5ex]

\noindent
\textbf{2.} According to Theorem \ref{Te1}, \(E_{\psi}(z)\) is an entire function of exponential
type. Its defect \(d_{E_\psi}=\dfrac{h_{E_\psi}(0)-h_{E_\psi}(\pi)}{2}\) is positive and all roots of
\(E_{\psi}\) lie in the closed left half-plane \(\textup{Re}\,z\leq0\).

\emph{From these properties of the function \(E_{\psi}\) it follows that there exists a sequence of polynomials \(\{E_n(z)\}_{1\leq{}n<\infty}\),
\begin{equation}
\label{PoP}
E_n(z)=\sum\limits_{k}\frac{e_{k,n}}{k!}z^k,
\end{equation}
possessing the properties}:
\begin{enumerate}
\item[1.]\emph{ The sequence \(\{E_n(z)\}_{1\leq{}n<\infty}\) converges
to the function \(E_{\psi}(z)\) locally uniformly in the complex plane
\(\mathbb{C}\). \textup{(}In particular, \(\lim_{n\to\infty}e_{k,n}=\psi(k+1)\) for each \(k\).\textup{)}
\item[2.] For every \(n\), the roots of the polynomial \(E_n(z)\) lie
in the closed left half-plane \(\textup{Re}\,z\leq0\).}
\end{enumerate}
These results can be found in \cite{L1} (Combine Theorem 4 of Chapt.\,8,\,
Lemma\,1 of  Chapt.\,7
and Theorem 6 of Chapt.\,7 from the book \cite{L1}.)

Let the polynomial \(F_n(z)\) be defined as the multiplicative composition
\begin{equation}
\label{PoFn}
F_n(z)=\sum\limits_{k}e_{k,n}f_kz^k,
\end{equation}
where \(e_{k,n}\) are the same as in \eqref{PoP} and \(f_k\) are the
Taylor coefficients of the function \(F(z)\) (See \eqref{TAY}).
Since roots of polynomial \(E_n\) lie in the left half-plane and the
\(F(z)\) belongs to the Laguerre-P\'{o}lya class
\(\mathscr{LP}\)-\textup{I},  it follows from P\'{o}lya-Schur Composition
Theorem\footnote{The Polya-Schur composition theorem appeared in the
paper \cite{PS}. It is based on the papers \cite{Sch1} by I.Schur and the
paper \cite{Po1} by G.Polya.}
that all roots of each of the polynomials \(F_n(z)\) lie
in the left half-plane \(\textup{Re}\,z<0\).
(See \cite{L1}, Chapt.8, Sect.3). Using the methods of \cite{L1}, Chapt.8, Sect.3, it follows that the sequence of the polynomials \(F_n(z)\) converges to the function \(F(z)\). Thus, all roots of \(F(z)\) lie in the
closed left half-plane, \(\textup{Re}\,z\leq0\).

It is possible to go one step further and prove the following
statement: If all roots of the function \(E_{\psi}(z)\) lie within the \emph{open}
left half-plane \(\textup{Re}\,z<0\), then so do all roots of the function \(F_{\psi}(z)\). We comment no further on these ideas here, but intend to address them in the very near future.

The first part of Theorem \ref{Te2} is clear. The second part of the theorem can be shown
in much the same way. Now considering the function \(F_{\psi}^{-}\), we
use the representation \eqref{InReFm}. The function \(F^{\prime}(z)\)
now serves the same purpose that \(F(z)\) did in our discussion of
\(F_{\psi}\). Since the function \(F\) belongs to the class \(\mathscr{LP}\)-\textup{I}, its derivative \(F^{\prime}(z)\) belongs
to the class \(\mathscr{LP}\)-\textup{I} as well.
\end{proof}

\noindent
\textbf{Acknowledgement}\\
I thank Professor Bernd Kirstein for his careful reading and editing
of the manuscript. His useful remarks and suggestions resulted in essential
improvements to our presentation of the material.

I thank Armin Rahn for his careful reading of the manuscript and his help
in improving the English in this paper.


\end{document}